\newtheorem{theorem}{Theorem}
\newtheorem{prop}[theorem]{Proposition}
\newtheorem{lemma}[theorem]{Lemma}
\newtheorem{claim}{Claim}
\newtheorem*{cor}{Corollary}
\theoremstyle{definition}
\newtheorem{remark}{Remark}
\numberwithin{equation}{section}
\newcommand{\Q}{{\mathbb Q}}
\newcommand{\R}{{\mathbb R}}
\def\Cob{\text{\rm Cob}^{SO}}
\begin{document}

\title{Homotopy investigation of classifying spaces of cobordisms of singular maps}

\author{Andr\'as Sz\H{u}cs\thanks{Department of Analysis, E\"otv\"os Lor\'and University (ELTE), P\'azm\'any P. s\'et\'any I/C,  Budapest, H-1117 Hungary, email: szucs@math.elte.hu} \and Tam\'as Terpai\thanks{Department of Analysis, E\"otv\"os Lor\'and University (ELTE), P\'azm\'any P. s\'et\'any I/C,  Budapest, H-1117 Hungary, email: terpai@math.elte.hu}}

\date{}

\maketitle

\renewcommand{\thefootnote}{}

\footnotetext{Acknowledgement: Both authors were supported by the National Research, Development and Innovation Office NKFIH (OTKA) Grant K 112735; the second author was supported by the National Research, Development and Innovation Office NKFIH (OTKA) Grant K 120697.}

\footnote{2010 \emph{Mathematics Subject Classification}: Primary 57R45; Secondary 57R90.}
\footnote{\emph{Keywords and phrases}: Singularity theory, cobordism, classifying space.}

\begin{abstract}
The classifying spaces of cobordisms of singular maps have two fairly different constructions. We expose a homotopy theoretical  connection between them. As a corollary we show that the classifying spaces in some cases have a simple product structure.
\end{abstract}

\section{Introduction}

The basis of all sorts of cobordisms is the Pontrjagin-Thom construction, that reduces the computation  to homotopical investigation of the classifying space.
For cobordisms of singular maps  the construction of the classifying space is rather involved.
There are two different constructions and the aim of the present paper is to establish a connection between them.

The first construction can be called the glueing procedure, the second one provides the classification space as a fibration. The latter we describe in section 2, and here we sketch the glueing procedure, which is based on global understanding of singular maps.
This global understanding includes the following three steps:
\begin{enumerate}[1)]
\item Local singularity theory: this concerns the normal forms of map-germs. For a long time this was the main subject of singularity theory. Results include those of Whitney \cite{Whitney}, Mather \cite{Mather}, Arnold \cite{Arnold}, Morin \cite{Morin} and others.
\item Automorphism groups of the normal forms. This provides a description of a singular map of a smooth manifold into another one in the neighbourhoods of its strata (a stratum is a set of points with equal local forms) as a bundle of map-germs, the structure group of the bundle is the authomorphism group. The corresponding universal bundles can be called the ``semiglobal description''. For results see J\"anich \cite{Janich}, Wall \cite{Wall}, Rim\'anyi \cite{RRPhD}, Sz\H ucs \cite{SzucsLNM}, Rim\'anyi-Sz\H ucs \cite{RSz}.
\item The final, global level describes how the semiglobal descriptions of the different strata fit together, how the neighbourhouds of simpler strata are glued to that of a more complicated stratum.  Glueing the universal germ-bundles accordingly one  constructs the classifying space for singular maps with a given list of allowed singularities (or multisingularities).
\end{enumerate}

\section{Details of the three steps and the glueing con\-struc\-tion of the classifying space}

\subsection{The local forms}
We consider smooth maps of $n$-manifolds to $(n+k)$-manifolds for a fixed positive $k$ and arbitrary $n$. We define equivalence classes of the germs of such maps as follows.
\par
A germ $\eta: (\R^n,0) \to (\R^{n+k},0)$ is \emph{equivalent} to
\begin{itemize}
\item all germs that are right-left equivalent to it (germs of the form $\psi \circ \eta \circ \varphi$ with $\varphi$ a germ of a diffeomorphism of $(\R^n,0)$ and $\psi$ a germ of a diffeomorphism of $(\R^{n+k},0)$);
\item the germ $\eta \times id_{\R^1}: (\R^n \times \R^1,0) \to (\R^{n+k}\times \R^1,0)$.
\end{itemize}
An equivalence class will be called a \emph{local singularity} (even if it has rank $n$ at $0$ and thus is a germ of an embedding).
\par
On the set of local singularities there is a hierarchy (partial ordering): we say that $\zeta<\eta$, i.e. $\eta$ is more complicated than $\zeta$,  if in any neighbourhood of $0 \in \R^n$ a germ that belongs to $\eta$ has a point where its germ belongs to $\zeta$.
\par
Alongside the local singularities (at the points of the source manifold) we shall consider \emph{multisingularities} at the points of the target manifold. If $f: M^n\to P^{n+k}$ is a smooth map such that each point $p \in P$ has finitely many preimage points, then the multiset (set with elements equipped with multiplicities) of local singularities at the preimages of $p$ will be called the multisingularity of $f$ at $p$. One can say that a multisingularity is a formal linear combination of local singularities with positive integer coefficients. Note that there is a hierarchy on the set of multisingularities as well, induced by the hierarchy on the set of local singularities.

\subsection{The automorphism groups and the semiglobal description.}\label{section:semiglobal}
Given a local singularity $\eta$ we shall consider its minimal dimensional representative (called the \emph{root} of $\eta$, denoted by $\eta_0$). Let $Aut_\eta$ be the automorphism group of $\eta_0: (\R^c,0)\to (\R^{c+k},0)$:
\begin{align*}
Aut_\eta =\{ (\varphi,\psi) : & \varphi \mbox{ and }\psi \mbox{ are germs of diffeomorphisms of }(\R^c,0) \\
&  \mbox{and }(\R^{c+k},0) \mbox{, respectively, such that } \eta_0 \circ \varphi = \psi \circ \eta_0 \}.
\end{align*}
J\"anich \cite{Janich} and Wall \cite{Wall} defined and proved the existence of the maximal compact subgroup of such an automorphism group $Aut_\eta$; we denote it by $G_\eta$. The group $G_\eta$ is constructed as a linear group and inherits a natural representation $\lambda$ on $(\R^c,0)$ as well as $\tilde \lambda$ on $(\R^{c+k},0)$:
$$
{\lambda (\varphi, \psi) = \varphi}, \ {\tilde \lambda(\varphi, \psi) = \psi}.
$$
One can suppose that the images of $\lambda$ and $\tilde \lambda$ are subgroups of the orthogonal groups $O(c)$ and $O(c+k)$, respectively.
\par
The semiglobal description of the $\eta$-stratum consists of the following objects:
\begin{enumerate}[a)]
\item The group $G_\eta$ and its representations $\lambda: G_\eta \to O(c)$ and $\tilde \lambda : G_\eta \to O(c+k)$.
\item The universal vector bundles $\xi_\eta$ and $\tilde\xi_\eta$ associated to the representations $\lambda$ and $\tilde\lambda$, respectively:
$$
\xi_\eta = EG_\eta \underset{\lambda}{\times} \R^c, \quad \tilde \xi_\eta = EG_\eta \underset{\tilde\lambda}{\times} \R^{c+k}
$$
(here $EG_\eta$ is a contractible free $G_\eta$-space).
\item A fiberwise map $\Phi_\eta : \xi_\eta \to \tilde\xi_\eta$ such that on each fiber the restriction belongs to the right-left equivalence class of $\eta_0$.
\end{enumerate}
The map $\Phi_\eta$ can be called  {\it{the semiglobal description of $\eta$}} because it describes the map around the stratum of $\eta$-points. Namely for any map $f: M^n \to P^{n+k}$ that has no more complicated multisingularities than $1\cdot \eta$ there are tubular neighbourhoods $T$ of the (closed, smooth) $\eta$ stratum embedded in $M$ and $\tilde T$ of the image of the $\eta$ stratum embedded into $P$ such that there is a commutative diagram
$$
\xymatrix{
T \ar[r]\ar[d]_{f|_T} & \xi_\eta \ar[d]^{\Phi_\eta} \\
\tilde T \ar[r] & \tilde\xi_\eta \\
}
$$
This is why we can say that the map $\Phi_{\eta}: \xi_{\eta} \to \tilde \xi_{\eta}$ is the universal object for the maps of a tubular neighbourhood of the $\eta$-stratum into that of its image. After necessary modifications a similar semiglobal description can be given for each multisingularity stratum (in particular, the map $\Phi_\eta$ can also be defined when $\eta$ is any multisingularity).

\subsection{Inductive constructions of the classifying space}

\subsubsection{The glueing procedure}\label{section:block}

Let $\tau$ be a set of multisingularities such that whenever $\eta\in\tau$ and $\zeta<\eta$, we also have $\zeta\in\tau$, and additionally for any given bound $N$ the number of multisingularities in $\tau$ that have codimension at most $N$ (in the source, say) is finite. A smooth map is called a \emph{$\tau$-map} if all its multisingularities belong to $\tau$. We now construct a universal $\tau$-map, whose target will be called {\emph{the classifying space for $\tau$-maps}} (the universality of this map is meant in the sense of Theorem \ref{thm:universal}).
\par
First we assume that $\tau$ is finite. The construction proceeds by induction. We start with the simplest case, when $\tau$-maps are embeddings. Then the universal map is $BO(k) \hookrightarrow MO(k)$, the inclusion of the Grassmannian manifold into the Thom space of the universal rank $k$ vector bundle. Let $\eta$ be a maximal multisingularity in $\tau$ and let $\tau'$ be the multisingularity set $\tau \setminus \{\eta\}$. Assume that the universal $\tau'$-map $f_{\tau'}: Y_{\tau'} \to X_{\tau'}$ has already been constructed. Consider the map
$$
\Phi_\eta:D(\xi_\eta) \to D(\tilde\xi_\eta)
$$
of the disc bundles. Here $D(\tilde\xi_\eta)$ is the disc bundle of $\tilde\xi_\eta$ (of a sufficiently small radius with respect to a smooth metric) and $D(\xi_\eta)$ is the preimage $\Phi^{-1}_\eta(D(\tilde\xi_\eta))$, which is homeomorphic to the disc bundle of $\xi_\eta$. In particular, the boundary $S(\xi_\eta) = \partial D(\xi_\eta)$ is mapped into $S(\tilde\xi_\eta)$ and the restriction of $\Phi_\eta$ to this boundary is a $\tau'$-map (it has no $\eta$-points). The universality of $f_{\tau'}$ implies that the map $\Phi_\eta|_{S(\xi_\eta)}: S(\xi_\eta) \to S(\tilde\xi_\eta)$ can be induced from it in the sense that there is a commutative diagram
$$
\xymatrix{
S(\xi_\eta) \ar[r]^{\Phi_\eta|_{S(\xi_\eta)}} \ar[d]_{\rho} & S(\tilde\xi_\eta) \ar[d]^{\tilde \rho_\eta} \\
Y_{\tau'} \ar[r]^{f_{\tau'}} & X_{\tau'}
}
$$
We define
\begin{align*}
Y_\tau &= Y_{\tau'} \underset{\rho_\eta}{\cup} D(\xi_\eta)\\
X_\tau &= X_{\tau'} \underset{\tilde\rho_\eta}{\cup} D(\tilde\xi_\eta)\\
f_\tau &= f_{\tau'} \cup \left( \Phi_\eta|_{D(\xi_\eta)}\right)
\end{align*}
Should $\tau$ be infinite, we define $\tau[N]$ as the set of those elements $\eta$ of $\tau$ whose codimension in the source is at most $N$ and assemble $X_\tau$, $Y_\tau$ and $f_\tau$ as the direct limits of the constructions for $X_{\tau[N]}$, $Y_{\tau[N]}$ and $f_{\tau[N]}$, respectively, as $N\to \infty$.

\begin{theorem}[{\cite{RSz},\cite{GT}}]\label{thm:universal}
The map $f_\tau: Y_\tau \to X_\tau$ is the universal $\tau$-map in the sense that whenever $f:M \to P$ is a $\tau$-map, there is a homotopically unique map $\kappa_f:P\to X_\tau$ and a map $\hat \kappa :M \to Y_\tau$ such that the diagram
$$
\xymatrix{
M \ar[r]^f \ar[d]^{\hat\kappa} & P \ar[d]^{\kappa_f} \\
Y_\tau \ar[r]^{f_\tau} & X_\tau 
}
$$
commutes. \qed
\end{theorem}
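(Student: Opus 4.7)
The plan is to prove this by induction on the partial order of multisingularities, mirroring the inductive construction of $X_\tau$ itself. First I would reduce to finite $\tau$ (the general case then follows by passing to direct limits, since any $\tau$-map from a compact piece of $M$ involves only finitely many strata below some codimension bound $N$, and the classifying data for $\tau[N]$ extend canonically to $\tau[N+1]$). The base case is when $\tau$ contains only the trivial (embedding) germ, in which case a $\tau$-map $f: M^n \to P^{n+k}$ is a codimension-$k$ embedding, $X_\tau = MO(k)$, and $\kappa_f$ is the Pontrjagin--Thom collapse of a tubular neighbourhood of $M$, with $\hat\kappa:M \to BO(k)$ classifying the normal bundle; homotopy uniqueness here is classical.

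For the inductive step, pick a maximal $\eta\in\tau$ and set $\tau'=\tau\setminus\{\eta\}$. Given a $\tau$-map $f:M\to P$, maximality of $\eta$ guarantees that the $\eta$-stratum is a closed smooth submanifold of $P$ with embedded preimage in $M$, so the semiglobal description of Section~\ref{section:semiglobal} supplies tubular neighbourhoods $T\subset M$ and $\tilde T\subset P$ and classifying maps $T \to D(\xi_\eta)$, $\tilde T \to D(\tilde\xi_\eta)$ intertwining $f|_T$ with $\Phi_\eta$. The restriction of $f$ to $M\setminus \mathrm{int}(T) \to P\setminus\mathrm{int}(\tilde T)$ is a $\tau'$-map, so by the inductive hypothesis it admits a homotopically unique classifying pair into $f_{\tau'}:Y_{\tau'}\to X_{\tau'}$. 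The desired $\kappa_f$ is then assembled from these two pieces via the pushout structure $X_\tau = X_{\tau'} \cup_{\tilde\rho_\eta} D(\tilde\xi_\eta)$, and similarly for $\hat\kappa$.

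The main obstacle is the compatibility of these two pieces on the overlap $\partial\tilde T = S(\tilde\xi_\eta)$: one must show that the inductive classifying map for $f|_{P\setminus \mathrm{int}(\tilde T)}$, restricted to $S(\tilde\xi_\eta)$, is homotopic (with compatible lift) to the attaching map $\tilde\rho_\eta$ used in the construction of $X_\tau$. This compatibility is exactly the homotopy uniqueness part of the inductive hypothesis applied to the $\tau'$-map $\Phi_\eta|_{S(\xi_\eta)} : S(\xi_\eta) \to S(\tilde\xi_\eta)$, since both maps classify this same $\tau'$-map. Once the homotopy is fixed, the homotopy extension property lets us adjust the two classifying maps to agree strictly on $S(\tilde\xi_\eta)$ and glue. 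Homotopy uniqueness of the resulting $\kappa_f$ follows by the same top-down scheme: any competing classifying map can be deformed, one stratum at a time from most to least complicated, using the maximal-stratum uniqueness and then invoking the inductive hypothesis on the complement. The delicate book-keeping required when $\eta$ is a genuine multisingularity (so the $\eta$-locus in $M$ has several sheets over its image) is absorbed into the semiglobal description already granted in Section~\ref{section:semiglobal}.
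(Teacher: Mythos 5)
The paper does not actually prove Theorem~\ref{thm:universal}: it is stated with citations to \cite{RSz} and \cite{GT} and closed with \verb|\qed|, meaning the proof is imported from those sources. So there is no in-paper proof to compare against. Your sketch does follow the standard inductive Pontrjagin--Thom strategy of the cited literature (base case for embeddings, induction over a maximal multisingularity $\eta$, assembling the classifying map piece by piece via the pushout presentation of $X_\tau$), and the high-level structure is right.

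One step as you state it is not quite correct. You argue that the compatibility on the overlap follows because ``the inductive classifying map for $f|_{P\setminus\mathrm{int}\tilde T}$ restricted to $S(\tilde\xi_\eta)$'' and $\tilde\rho_\eta$ ``both classify the same $\tau'$-map $\Phi_\eta|_{S(\xi_\eta)}$.'' This conflates two different objects. The restriction of the inductive classifying map lives on $\partial\tilde T$ (the boundary of the tubular neighbourhood of the $\eta$-stratum of the \emph{given} map $f$), not on $S(\tilde\xi_\eta)$, and what it classifies is the honest $\tau'$-map $f|_{\partial T}\colon\partial T\to\partial\tilde T$, not the generalized $\tau'$-map $\Phi_\eta|_{S(\xi_\eta)}$. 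What you actually need is: (i) the semiglobal description yields a bundle map $g\colon\partial\tilde T\to S(\tilde\xi_\eta)$ through which $f|_{\partial T}$ is pulled back from $\Phi_\eta|_{S(\xi_\eta)}$; (ii) by naturality of classifying maps under pullback, $\tilde\rho_\eta\circ g$ classifies $f|_{\partial T}$; (iii) homotopy uniqueness of the $\tau'$-classification for the \emph{genuine} $\tau'$-map $f|_{\partial T}$ then identifies the two restrictions up to homotopy. Naturality under pullback is an extra ingredient you should carry as part of the inductive hypothesis (and it is also what makes your appeal to $\Phi_\eta|_{S(\xi_\eta)}$, which is only a generalized $\tau'$-map in the sense of Remark~\ref{remark:gener}, legitimate rather than a direct application of the theorem). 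The homotopy-uniqueness part of your argument (``deform one stratum at a time'') is likewise only a heading for an obstruction/HEP argument that would need to be carried out; as a blind sketch this is acceptable, but it is where most of the technical work in \cite{RSz} and \cite{GT} actually resides.
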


Given a closed $(n+k)$-manifold $P$, we denote by $Cob_\tau(P)$ the set of cobordism classes of $\tau$-maps of closed $n$-manifolds into $P$ (with cobordisms being $\tau$-maps into $P\times [0,1]$).

\begin{cor}[{\cite{RSz},\cite{GT}}]
$$
{\rm Cob}_\tau(P) = [P,X_\tau].
$$
\end{cor}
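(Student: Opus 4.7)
The plan is to follow the classical Pontryagin-Thom template: one direction is a direct consequence of Theorem \ref{thm:universal}, while the other requires a transversality argument adapted to the stratified structure of $X_\tau$. Define $\Psi \colon {\rm Cob}_\tau(P) \to [P, X_\tau]$ by $[f] \mapsto [\kappa_f]$, where $\kappa_f$ is the homotopically unique classifying map supplied by the universality property. That $\Psi$ is well defined on cobordism classes is immediate: applying Theorem \ref{thm:universal} to a cobordism $F \colon W \to P \times [0,1]$ between $f_0$ and $f_1$ yields a $\kappa_F$ which, viewed as a homotopy, connects $\kappa_{f_0}$ and $\kappa_{f_1}$ (after projecting through the obvious homotopy equivalence $P \times [0,1] \simeq P$).

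For the inverse direction, given $g \colon P \to X_\tau$ one must produce a $\tau$-map $f \colon M \to P$ whose classifying map is homotopic to $g$. The key observation is that $f_\tau \colon Y_\tau \to X_\tau$ is itself a $\tau$-map and that $X_\tau$ carries a natural stratification inherited from the inductive glueing of section \ref{section:block}: for each $\eta \in \tau$ the zero-section $B_\eta$ of the glued-in block $D(\tilde\xi_\eta)$ sits inside $X_\tau$ as a submanifold with normal bundle $\tilde\xi_\eta$. After a small homotopy of $g$ we may assume $g$ is transverse to every $B_\eta$; taking $M := g^{\ast} Y_\tau$ and letting $f$ be the induced projection produces a $\tau$-map whose classifying map is, by construction, the given $g$. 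Running the same argument on a homotopy $G \colon P \times [0,1] \to X_\tau$ between two such maps produces a cobordism between the pulled-back $\tau$-maps, yielding injectivity of $\Psi$.

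The main obstacle will be the transversality step, which is delicate for two reasons. First, $X_\tau$ is not a smooth manifold but an assembly of disc bundles of widely varying dimensions glued along their boundaries, so transversality must be established stratum by stratum, proceeding inductively from the simplest blocks to the most complicated and handling the interfaces where different blocks meet. Second, the pullback must genuinely be a $\tau$-map, with no spurious new singularities appearing on $M$; here the semiglobal descriptions $\Phi_\eta$ of section \ref{section:semiglobal} are essential, because they force the pulled-back map on a tubular neighbourhood of each $\eta$-stratum to have precisely the allowed local form and no worse. Once this stratumwise transversality is established, both surjectivity and injectivity of $\Psi$ follow simultaneously by applying the construction to maps and to homotopies respectively, completing the identification ${\rm Cob}_\tau(P) = [P, X_\tau]$.
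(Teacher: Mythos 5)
The paper states this corollary without proof, attributing it to \cite{RSz} and \cite{GT}; so the comparison is against the argument in those references. Your proposal is the standard Pontryagin--Thom scheme that those papers do in fact employ, and it is correct in outline. The forward map $\Psi\colon [f] \mapsto [\kappa_f]$ and its well-definedness via Theorem \ref{thm:universal} applied to a cobordism $F\colon W \to P \times [0,1]$ are exactly right, including the observation that $\kappa_F$ gives the required homotopy after composing with the projection $P \times [0,1] \to P$.

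Two points deserve sharpening. First, $Y_\tau$ and $X_\tau$ are direct limits, so the transversality argument must be run on a finite approximation $f_{\tau[N]}\colon Y_{\tau[N]} \to X_{\tau[N]}$ for $N$ large relative to $\dim P$; this is harmless but worth saying, since ``transverse to $B_\eta$'' only makes sense once one is in a finite-dimensional block. Second, and more substantively, the step where you assert that the pullback $M := P \times_{X_\tau} Y_\tau$ carries a genuine $\tau$-map is precisely the nontrivial content of \cite{RSz}: one needs (a) a stratified transversality theorem compatible with the inductive glueing (transversality to $B_\eta$ for the top stratum must be achievable rel the already-arranged transversality on the attached lower blocks), and (b) the fact that over the glued block $D(\tilde\xi_\eta)$ the universal map restricts to the semiglobal model $\Phi_\eta$, so that a transverse pullback reproduces exactly the prescribed local form of $\eta$ on a tubular neighbourhood and introduces nothing worse. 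You name both points, but treat them as ``obstacles to be handled'' rather than as lemmas with content; in the cited references these occupy most of the proof. Finally, for injectivity you should also record the (easy) fact that the pullback of the constant homotopy at $\kappa_f$ recovers a map cobordant to $f$, so that the cobordism you extract from $G$ really connects $f_0$ to $f_1$ rather than just two auxiliary pullbacks. With these caveats, your route is the same as that of \cite{RSz} and \cite{GT}.
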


This explains why $X_\tau$ is called the classifying space for $\tau$-maps. For futher purpose we make the following remark.

\begin{remark}\label{remark:gener}
The universal $\tau$-map is not literally a $\tau$-map, since its domain and target are not finite dimensional manifolds, but it is a limit of (proper) $\tau$-maps. We shall call such a map {\emph{generalized $\tau$-map}}. For example, the product
 $f_\tau \times (\text{identity of} \ S) : Y_\tau \times S \to X_\tau \times S$, where $S$ is a limit of finite dimensional manifolds, is also  a generalized $\tau$-map.
Theorem \ref{thm:universal} implies in particular that any $\tau$-map induces a map of its target manifold into the classifying space $X_\tau$. This remains true for generalized $\tau$-maps as well. For example, the universal $\tau$- map $f_\tau:Y_\tau \to X_\tau$ corresponds to the identity map $X_\tau \to X_\tau$, while a product map
$f_\tau \times (\text{identity of} \ S)$ corresponds to the projection map $X_\tau \times S \to X_\tau$. Note also that the union of $\tau$-maps (or generalized $\tau$-maps) corresponds to the pointwise product of the corresponding classifying maps in the $H$-space $X_\tau$ (see \cite{GT}).
\end{remark}

\section{Fibration construction of the classifying space}\label{section:fibration}

We now discuss the second construction of the classifying space, introduced in \cite{GT}. This construction works only under the assumption that $\tau$ consist of all multisingularities that are linear combinations of  local singularities from a fixed finite set of local singularities.

From now on we use the following notation:
\begin{itemize}
\item $\eta$ is a maximal local singularity from this fixed set $\tau$
\item $\tau'$ is the set of all multisingularities in $\tau$ that do not contain $\eta$
\end{itemize}
The construction proceeds as follows: assume that the classifying space $X_{\tau'}$ has already been constructed. Let $T\tilde\xi_\eta$ denote the Thom space of the bundle $\tilde\xi_\eta$ and let $\Gamma T\tilde\xi_\eta$ denote the space $\Omega^\infty S^\infty T\tilde\xi_\eta = \lim_{m \to \infty} \Omega^m S^m T\tilde\xi_\eta$ (where $\Omega A$ is the loop space of the space $A$ and $SA$ is the suspension of $A$). Note that $\Gamma T\tilde\xi_\eta$ is the classifying space for cobordims of  immersions equipped with a pullback of the normal bundle from $\tilde\xi_\eta$.
\par
The induction step for this construction is the observation (conjectured by E. Szab\'o and proved in \cite{GT}, see also another proof in \cite{altFibration}) that there is a fibration (we shall call it the ``key bundle'')
\begin{equation}\label{eq:keyBundle}\tag{$\heartsuit$}
\begin{gathered}
\xymatrix{
X_{\tau'} \ar@{^{(}->}[r] & X_\tau \ar[d]^{p_\tau} \\
& \Gamma T\tilde\xi_\eta
}
\end{gathered}
\end{equation}
i.e. $X_\tau$ is the total space, $X_{\tau'}$ is the fibre, and $\Gamma T\tilde\xi_\eta$ is the base space.
\par
The map $p_\tau$ can be described by the natural map between the functors $P \mapsto [P,X_\tau]$ and $P\mapsto [P,\Gamma T\tilde\xi_\eta]$ as follows. By Corollary to Theorem \ref{thm:universal} the set $[P,X_\tau]$ is in one-to-one correspondence with the cobordism group ${\rm Cob}_\tau(P)$ of $\tau$-maps in $P$. There is a well-known geometric interpretation of the set of homotopy classes $[P,\Gamma T\zeta]$ for any vector bundle $\zeta$, see e.g. \cite{Wells} for the case of $\zeta$ being the universal $k$-bundle -- it is in one-to-one correspondence with the cobordism group ${\rm Imm}^\zeta(P)$ of immersions with target $P$ and normal bundle pulled back from $\zeta$ (such immersions are called $\zeta$-immersions). Considering the image of the $\eta$-stratum of a $\tau$-map (which is immersed in the target manifold), it has a normal bundle that admits a natural pullback from $\tilde\xi_\eta$. Hence we obtain a natural map ${\rm Cob}_\tau (P) \to {\rm Imm}^{\tilde\xi_\eta}(P)$ that associates to the cobordism class of a $\tau$-map the cobordism class of the $\tilde\xi_\eta$-immersion of its $\eta$-stratum. This natural map is induced by a map of the corresponding classifying spaces, and that is $p\tau:X_\tau\to\Gamma T\tilde\xi_\eta$.
\par
A second description of $p_\tau$ (that is crucial in the proof of Theorem \ref{thm:universal}) comes from the glueing procedure that produces $X_\tau$. For each multisingularity $\theta=J+ n\eta$, where $J$ is a multisingularity that belongs to $\tau'$, we consider the disc bundle $D_\theta=D_J \times \left(D(\tilde\xi_\eta)\right)^n$ -- here $D_J$ for $J=\sum_i m_i \eta_i$ denotes the product $D_J= \prod_i \left(D(\tilde\xi_{\eta_i})\right)^{m_i}$. Then we attach them using maps $\tilde \rho_\theta$ (section \ref{section:block} describes them for $\theta=1\cdot\eta$) along their boundary. If we project each such disc bundle onto the factor $(D_\eta)^n$ and attach these bundles correspondingly, then we obtain $\Gamma T\tilde\xi_\eta$. These projections are compatible with the attaching maps and hence combine to a map $X_\tau \to \Gamma T\tilde\xi_\eta$, which is $p_\tau$.
\par
The key bundle can be induced from a universal $X_{\tau'}$-bundle
$$
\xymatrix{
X_{\tau'} \ar@{^{(}->}[r] & EX_{\tau'} \ar[d] \\
&  B X_{\tau'}}
$$
where $EX_{\tau'}$ is a contractible space
 and $BX_{\tau'}$ is a space such that its loop space $\Omega BX_{\tau'}$ is $X_{\tau'}$.
The main result of the paper expresses the map $b_\eta:  \Gamma T\tilde\xi_\eta \to BX_{\tau'}$ that induces the key bundle from the universal $X_{\tau'}$-bundle through the glueing map $\tilde \rho_\eta: \partial (D\tilde \xi_\eta) \to X_{\tau'}$ of section \ref{section:block}, and in this way we connect the two constructions. Namely, we give a purely homotopy theoretical construction that taking as an input the attaching map $\tilde \rho_\eta$ (which has a clear geometric meaning in the first construction) produces the map $b_\eta$ that induces the key bundle  $X_\tau \to \Gamma  T\tilde \xi_\eta$ and thus allows translating questions related to the latter construction to questions relating to the former one (see the corollaries in section \ref{section:apps}).

\section{Formulation of the result}

In this section we outline the general structure of the proof of our main result and encapsulate its key points in several claims. The proofs of the claims are deferred to the next section.

First we describe the key bundle $p_\tau:X_\tau \to \Gamma T\tilde\xi_\eta$ over the open ball bundle $\mathring D \tilde\xi_\eta \subset T\tilde\xi_\eta \subset \Gamma T \tilde\xi_\eta$, which is the Thom space $T\tilde\xi_\eta$ of $\tilde\xi_\eta$ without its special point.

\begin{claim}\label{claim:b_D}
$$
p_\tau^{-1}(\mathring D \tilde\xi_\eta) = X_{\tau'} \times \mathring D \tilde\xi_\eta.
$$
\end{claim}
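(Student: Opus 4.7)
The plan is to read the claim directly off the glueing construction of $X_\tau$ (Section \ref{section:block}) combined with the second, block-by-block description of $p_\tau$ given at the end of Section \ref{section:fibration}. Recall that for each multisingularity $\theta=J+n\eta$ with $J\in\tau'$ the block is $D_\theta=D_J\times(D\tilde\xi_\eta)^n$, and $p_\tau|_{D_\theta}$ is the projection onto $(D\tilde\xi_\eta)^n$ followed by the canonical map $(D\tilde\xi_\eta)^n\to \Gamma T\tilde\xi_\eta$ that sends an $n$-tuple to the corresponding configuration, collapsing each boundary coordinate to the basepoint of $T\tilde\xi_\eta$. Hence the preimage of $\mathring D\tilde\xi_\eta \subset T\tilde\xi_\eta$ (the configurations consisting of a single non-basepoint) in $(D\tilde\xi_\eta)^n$ is empty when $n=0$ (image is the basepoint) or $n\geq 2$ (image has length $\geq 2$ away from a collapse locus that is sent to the basepoint), and equals $\mathring D\tilde\xi_\eta$ itself when $n=1$. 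So only the blocks $D_{J+\eta}=D_J\times D\tilde\xi_\eta$ with $J\in\tau'$ contribute, and in each one the preimage of $\mathring D\tilde\xi_\eta$ is $D_J\times\mathring D\tilde\xi_\eta$.

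Next I would analyze the glueing identifications among these preimages. The boundary decomposes as
\[
\partial D_{J+\eta}\;=\;\partial D_J\times D\tilde\xi_\eta\;\cup\;D_J\times\partial D\tilde\xi_\eta;
\]
the second piece lies over $\partial D\tilde\xi_\eta$ and is disjoint from the preimage, so only the first piece matters. Points of $\partial D_J\times D\tilde\xi_\eta$ correspond to degenerations of the $J$-stratum to a simpler multisingularity $J'<J$, with the disjoint $\eta$-point staying put, so $\tilde\rho_{J+\eta}$ sends this piece into $D_{J'+\eta}=D_{J'}\times D\tilde\xi_\eta$. The inductive universality defining $\tilde\rho_{J+\eta}$ forces this attaching to split as $\rho_J\times \mathrm{id}_{D\tilde\xi_\eta}$, where $\rho_J$ is the attaching map used to glue $D_J$ in the construction of $X_{\tau'}$.

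Combining these observations, every identification among the preimages $D_J\times\mathring D\tilde\xi_\eta$ ($J\in\tau'$) has the product form $\rho_J\times\mathrm{id}_{\mathring D\tilde\xi_\eta}$, so performing the identifications in $\bigsqcup_{J\in\tau'}D_J\times\mathring D\tilde\xi_\eta$ amounts to performing them only on the $D_J$ factor; the resulting quotient is exactly $X_{\tau'}\times\mathring D\tilde\xi_\eta$, as claimed. The main obstacle I anticipate is making the second paragraph rigorous, i.e.\ verifying from the recursive definition of the glueing maps that $\tilde\rho_{J+\eta}|_{\partial D_J\times D\tilde\xi_\eta}=\rho_J\times\mathrm{id}$. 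Geometrically this is the natural statement that a disjoint $\eta$-point plays no role in the degeneration of the $J$-stratum, but it still needs to be traced through the inductive construction of Section \ref{section:block}.
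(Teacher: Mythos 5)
Your proposal takes essentially the same route as the paper: identify the blocks of the glueing construction that meet the preimage, observe that the attaching maps of those blocks split as a product with the identity on the $D\tilde\xi_\eta$ factor, and conclude that the preimage is the product $X_{\tau'}\times\mathring D\tilde\xi_\eta$. The paper's own proof is terser and simply asserts that $p_\tau^{-1}(\mathring D\tilde\xi_\eta)$ is built by attaching the products $\mathring D\tilde\xi_\eta\times D_J$ via $id\times r_J$, which is exactly your conclusion; you have tried to spell out why the other blocks do not contribute, and that is where there is a small imprecision worth flagging. For $n\ge 2$ the preimage of $\mathring D\tilde\xi_\eta$ inside $D_J\times(D\tilde\xi_\eta)^n$ is not actually empty: a tuple with exactly one coordinate in $\mathring D\tilde\xi_\eta$ and all the others on the sphere $S(\tilde\xi_\eta)$ maps to a one-point configuration, hence into $\mathring D\tilde\xi_\eta$. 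What is true, and what the argument actually needs, is that this preimage is entirely contained in the boundary $\partial D_\theta$ (some coordinate lies on $S(\tilde\xi_\eta)$), so it is glued by the attaching maps to points that already appear in lower ($n=1$) blocks; the interior $\mathring D_\theta$ meets $p_\tau^{-1}(\mathring D\tilde\xi_\eta)$ only when $n=1$. With this correction your block-by-block analysis, including the product form $\rho_J\times id$ of the relevant attaching maps, reproduces the paper's argument.
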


Next we describe the key bundle over the Thom space $T \tilde\xi_\eta$ and its inducing map. Denote by $X_T$ the space $p^{-1}_\tau(T\tilde\xi_\eta)$ and consider the bundle $p_\tau|_{X_T}: X_T \overset{X_{\tau'}}{\to} T\tilde \xi_\eta$. Its inducing map, $b_T: T\tilde\xi_\eta \to BX_{\tau'}$, is the restriction of $b_\eta$ to $T \tilde \xi_\eta$.
Let $pr:T\tilde\xi_\eta \to T\tilde \xi_\eta / BG_\eta$ be the quotient map that contracts the zero section of $\tilde \xi_\eta$. Note that the quotient space $T\tilde\xi_\eta/BG_\eta$ coincides with the suspension $\Sigma S(\tilde\xi_\eta)$ of the sphere bundle $S(\tilde\xi_\eta)$. Since $p_\tau$ is trivial over the open ball bundle $\mathring D \tilde\xi_\eta$, in particular over its zero section $BG_\eta$, after a homotopy it can be supposed that the map $b_T$ maps $BG_\eta$ to a single point. Hence the map $b_T$ can be decomposed into the composition of the quotient map  $pr:T\tilde\xi_\eta \to  T\tilde \xi_\eta / BG_\eta$ and  a map $\sigma:  T\tilde \xi_\eta / BG_\eta  \to BX_{\tau'}$, i.e. $b_T = \sigma \circ pr$.
\par
To express the map $\sigma$, recall that for any pointed topological spaces $A$ and $B$ the (based) maps $A \to \Omega B$ are in one-to-one correspondence with (based) maps $\Sigma A \to B$, where $\Sigma$ is the reduced suspension. We call such maps that correspond to one another \emph{adjoint}. In our case, $T\tilde \xi_\eta / BG_\eta = \Sigma(S(\tilde\xi_\eta))$ and $X_{\tau'}=\Omega BX_{\tau'}$, allowing the formulation of the following claim:

\begin{claim}\label{claim:adjoint}
$\sigma:\Sigma S(\tilde\xi_\eta) \to BX_{\tau'}$ is the adjoint of $\tilde \rho_\eta: S(\tilde\xi_\eta) \to X_{\tau'}$.
\end{claim}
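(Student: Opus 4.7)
The plan is to prove Claim \ref{claim:adjoint} by first identifying the bundle $X_T\to T\tilde\xi_\eta$ explicitly via the glueing description of $X_\tau$ from Section \ref{section:block}, and then reading off its clutching function over the suspension $\Sigma S(\tilde\xi_\eta)$ by path lifting.

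First I would unpack $X_T=p_\tau^{-1}(T\tilde\xi_\eta)$ using the second description of $p_\tau$ from Section \ref{section:fibration}. Only two types of blocks $D_\theta$ project into $T\tilde\xi_\eta\subset \Gamma T\tilde\xi_\eta$: those with $\theta\in\tau'$, which project to the Thom space basepoint and assemble into the fibre $X_{\tau'}$, and those of the form $\theta=J+\eta$ with $J\in\tau'$, whose disc bundle $D_J\times D\tilde\xi_\eta$ projects via the second factor onto $D\tilde\xi_\eta$. As $J$ varies, these latter blocks assemble into $X_{\tau'}\times D\tilde\xi_\eta$ fibering over $D\tilde\xi_\eta$, in accordance with Claim \ref{claim:b_D}. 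Over the boundary, the restrictions of the attaching maps $\tilde\rho_{J+\eta}$ to $D_J\times S(\tilde\xi_\eta)$ paste these blocks onto $X_{\tau'}$. Using the $H$-space structure on $X_{\tau'}$ (Remark \ref{remark:gener}), which realizes disjoint union of strata as pointwise multiplication of classifying maps, these restricted attachings combine into a single collapse
$$
\mu:X_{\tau'}\times S(\tilde\xi_\eta)\to X_{\tau'},\qquad\mu(x,s)=x\cdot\tilde\rho_\eta(s).
$$

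With this description in hand, I would identify $\sigma$ by computing its adjoint $\sigma^\vee: S(\tilde\xi_\eta)\to \Omega BX_{\tau'}\simeq X_{\tau'}$ via path lifting. For $s\in S(\tilde\xi_\eta)$ with projection $b\in BG_\eta$, take the radial path $\gamma_s(t)=[b,ts]\in T\tilde\xi_\eta$, $t\in[0,1]$, going from the zero section (which in $\Sigma S(\tilde\xi_\eta)$ becomes the south cone point) to the Thom space basepoint (the north cone point). The loop $b_T\circ\gamma_s$ in $BX_{\tau'}$ is a based loop representing $\sigma^\vee(s)\in\Omega BX_{\tau'}$. Lifting $\gamma_s$ through the bundle $X_T\to T\tilde\xi_\eta$ starting at the unit element $e\in X_{\tau'}$, the portion with $t<1$ stays in the trivialization $X_{\tau'}\times\mathring D\tilde\xi_\eta$ and is constant in the fibre direction, while at $t=1$ the identification $\mu$ forces the endpoint to be $\mu(e,s)=\tilde\rho_\eta(s)\in X_{\tau'}$. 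By the standard recipe that reads off an element of $\Omega BX_{\tau'}\simeq X_{\tau'}$ as the endpoint of such a lift in the universal $X_{\tau'}$-bundle, this identifies $\sigma^\vee(s)$ with $\tilde\rho_\eta(s)$, proving the claim.

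The hard part is the formula $\mu(x,s)=x\cdot\tilde\rho_\eta(s)$: this requires a careful compatibility check between the higher attaching maps $\tilde\rho_{J+\eta}$ for $J\in\tau'$ and the $H$-space multiplication on $X_{\tau'}$. One route is to invoke directly the description of the $H$-space structure of $X_{\tau'}$ from \cite{GT}; an alternative is to verify the formula on simple test targets (where a $\tau$-map with a single $\eta$-point plus a prescribed $J$-configuration admits an explicit classifying map) and then transport the conclusion to the universal setting. Once $\mu$ is established, the path-lifting computation of $\sigma^\vee$ is formal.
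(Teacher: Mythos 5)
Your proof takes essentially the same route as the paper. The paper packages the path-lifting computation as a general proposition about $H$-space bundles over suspensions (that $\psi\circ\varphi^{-1}$ is adjoining, proven by lifting along $\tilde f\colon Cone\,Y\to P(B\mathcal H)$), and then applies it to $\hat\xi$; you perform the same path lift directly on the bundle $X_T$. The main ingredient in both is the formula for the attaching map $\mu(x,s)=x\cdot\tilde\rho_\eta(s)$, which is exactly Claim~\ref{claim:plus1} of the paper; the paper proves it by interpreting the $H$-space product on $X_{\tau'}$ as disjoint union of generalized $\tau'$-maps (Remark~\ref{remark:gener}), which matches the first of your two proposed routes. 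One small point worth making explicit: you read off $\sigma^\vee(s)$ from a lift in $X_T$ rather than in $EX_{\tau'}$; this is justified because $\hat\xi$ is the pullback $\sigma^*(EX_{\tau'})$, so the lift transports along the bundle map covering $\sigma$ — the paper avoids this wrinkle by lifting directly to $E\mathcal H$ via $\hat f$.
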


We have hence expressed the restriction $b_T = b_\eta|_{T\tilde \xi_\eta}$ using the attaching map $\tilde \rho_\eta$.

We can now describe the space $X_\tau$. In order to do so, recall that Brown's representability theorem admits a variant (e.g. \cite[Ch 1, \S 5, Theorem 6]{Spanier}) that states that if a functor $F$ represented by a space $B^F$ takes values in groups, then $B^F$ is an H-space. Since ${\rm Cob}_\tau(P)$ is a group (for those $\tau$ that are considered in this construction of classifying spaces -- see beginning of Section \ref{section:fibration}), the space $X_\tau$ is an H-space. The operation in $X_\tau$ corresponds to that in ${\rm Cob}_\tau(P)$, which is induced by disjoint union of the sources of the $\tau$-map representatives.

\begin{claim}\label{claim:plus1}
The space $p^{-1}_\tau(T\tilde\xi_\eta) = X_T$ is obtained from the disjoint union $D(\tilde\xi_\eta) \times X_{\tau'} \sqcup X_{\tau'}$ by attaching $S(\tilde\xi_\eta) \times X_{\tau'} \subset D(\tilde\xi_\eta) \times X_{\tau'}$ to $X_{\tau'}$ by the map
\begin{equation}\label{eq:product}\tag{$\dag$}
(s,x) \mapsto \tilde\rho_\eta(s) \cdot x
\end{equation}
where $s \in S(\tilde\xi_\eta)$, $x\in X_{\tau'}$ and the dot denotes the multiplication in the $H$-space $X_{\tau'}$.
\end{claim}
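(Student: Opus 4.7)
The plan is to present $T\tilde\xi_\eta$ as the pushout $\{*\} \cup_{S(\tilde\xi_\eta)} D(\tilde\xi_\eta)$ along the Thom collapse $q: D(\tilde\xi_\eta) \to T\tilde\xi_\eta$ and to transport this pushout through the pullback construction of $X_T$. Since $S(\tilde\xi_\eta) \hookrightarrow D(\tilde\xi_\eta)$ is a cofibration, $X_T$ inherits the analogous pushout decomposition. The fibre over the basepoint is $X_{\tau'}$ by definition. For the piece over $D(\tilde\xi_\eta)$, by Claim \ref{claim:adjoint} the classifying map $b_T \circ q = \sigma \circ pr \circ q$ sends both the zero section $BG_\eta$ (collapsed by $pr$) and the boundary $S(\tilde\xi_\eta)$ (collapsed by $q$) to the basepoint of $BX_{\tau'}$; since $D(\tilde\xi_\eta)$ deformation retracts onto $BG_\eta$, the pullback over $D(\tilde\xi_\eta)$ is therefore trivial, isomorphic to $X_{\tau'} \times D(\tilde\xi_\eta)$, with restriction $X_{\tau'} \times S(\tilde\xi_\eta)$ to the boundary. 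Thus $X_T$ is $X_{\tau'} \times D(\tilde\xi_\eta) \sqcup X_{\tau'}$ attached by some map $\alpha: X_{\tau'} \times S(\tilde\xi_\eta) \to X_{\tau'}$, and it remains only to identify $\alpha$.

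Next I parametrize $D(\tilde\xi_\eta) = S(\tilde\xi_\eta) \times [0, 1] / (S(\tilde\xi_\eta) \times \{0\} \to BG_\eta)$ by the radial magnitude. For each $s \in S(\tilde\xi_\eta)$ the ray $t \mapsto (s, t)$ is sent by $b_T \circ q$ to a loop in $BX_{\tau'}$ based at the basepoint (since both ends lie in the preimage of the basepoint), and the assembled map $S(\tilde\xi_\eta) \times [0,1] \to BX_{\tau'}$ descends through the quotient $S(\tilde\xi_\eta) \times [0,1] \to \Sigma S(\tilde\xi_\eta)$ to the map $\sigma$, which by Claim \ref{claim:adjoint} is adjoint to $\tilde\rho_\eta$. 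Hence the loop along the ray at $s$ represents exactly the element $\tilde\rho_\eta(s) \in \Omega BX_{\tau'} = X_{\tau'}$. Invoking the standard monodromy principle for the universal $H$-space fibration $EX_{\tau'} \to BX_{\tau'}$ --- that parallel transport along a based loop representing $g$ acts on the fibre $X_{\tau'}$ by multiplication by $g$ in the $H$-space structure, as is immediate from the Moore path model where monodromy is literally loop concatenation --- the fibre at the boundary point $s$ is identified with $p_\tau^{-1}(*) = X_{\tau'}$ via multiplication by $\tilde\rho_\eta(s)$, so that $\alpha(s, x) = \tilde\rho_\eta(s) \cdot x$, as asserted.

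The main obstacle is the monodromy identification together with the accompanying bookkeeping. One must check that the $H$-space product on $X_{\tau'}$ entering the formula of Claim \ref{claim:plus1} matches the loop-concatenation product on $\Omega BX_{\tau'}$; this follows from Remark \ref{remark:gener}, which links the $H$-space multiplication to disjoint union of cobordism representatives, combined with the Brown representability identification $X_{\tau'} \simeq \Omega BX_{\tau'}$. The issue of left versus right multiplication is not substantive because the cobordism $H$-space is homotopy commutative. The remaining steps --- the pushout assembly, the trivialisation of the bundle over $D(\tilde\xi_\eta)$ extending Claim \ref{claim:b_D} across the boundary sphere bundle, and the compatibility of the attaching map with the cofibration structure --- are routine consequences of homotopy invariance of pullback bundles and can be verified by tracing through the universal property.
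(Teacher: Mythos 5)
Your argument takes a genuinely different route from the paper, but it contains a circularity. The crucial step identifies the monodromy element of the radial loop at $s$ with $\tilde\rho_\eta(s)$, and for this you invoke Claim~\ref{claim:adjoint}. However, the paper's own proof of Claim~\ref{claim:adjoint} \emph{begins} by asserting that the bundle $\hat\xi$ induced by $\sigma$ over $\Sigma S(\tilde\xi_\eta)$ is obtained from $X_{\tau'}\times C(S(\tilde\xi_\eta))\sqcup X_{\tau'}$ by the identification $(x,s)\mapsto\tilde\rho_\eta(s)\cdot x$ on $S(\tilde\xi_\eta)$; that assertion is precisely the content of Claim~\ref{claim:plus1} (pushed forward across the collapse of the zero section $BG_\eta$). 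The general $H$-space proposition that follows in that proof merely translates this clutching description into the adjointness statement. In other words, the genuine geometric content flows from Claim~\ref{claim:plus1} to Claim~\ref{claim:adjoint}, not the reverse, so deducing Claim~\ref{claim:plus1} from Claim~\ref{claim:adjoint} is circular relative to the paper's logical structure. Without an independent proof of Claim~\ref{claim:adjoint}, your monodromy computation only yields $\alpha(s,x)=\sigma^{adj}(s)\cdot x$, leaving the identification $\sigma^{adj}=\tilde\rho_\eta$ unproved.

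The paper supplies the missing input from the cobordism-theoretic meaning of the spaces rather than from bundle theory. It recognizes the attaching map $X_{\tau'}\times S(\tilde\xi_\eta)\to X_{\tau'}$ as the map induced (via Theorem~\ref{thm:universal} and Remark~\ref{remark:gener}) by the generalized $\tau'$-map into $X_{\tau'}\times S(\tilde\xi_\eta)$ that is the disjoint union of $f_{\tau'}\times\mathrm{id}_{S(\tilde\xi_\eta)}$ and $\mathrm{id}_{X_{\tau'}}\times\Phi_\eta|_{S(\xi_\eta)}$. The first summand alone would induce the projection $X_{\tau'}\times S(\tilde\xi_\eta)\to X_{\tau'}$; the second alone would induce $\tilde\rho_\eta\circ pr_2$; and by Remark~\ref{remark:gener} a disjoint union of generalized $\tau'$-maps induces the pointwise $H$-space product of the individual induced maps, yielding $(x,s)\mapsto x\cdot\tilde\rho_\eta(s)$ directly. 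Your pushout decomposition of $T\tilde\xi_\eta$ and the trivialization over the contractible $D(\tilde\xi_\eta)$ (extending Claim~\ref{claim:b_D}) are sound, and the monodromy principle you cite is correct as a general fact; what is missing is the geometric identification of the monodromy loop with $\tilde\rho_\eta$, which is exactly what the paper's cobordism argument provides and which cannot be replaced by an appeal to Claim~\ref{claim:adjoint}.
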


Finally, in order to express the map $b_\eta$ itself we need the following claim:

\begin{claim}\label{claim:loop}
The map $b_\eta$ is an infinite loop space map.
\end{claim}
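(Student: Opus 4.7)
My plan is to realise (\ref{eq:keyBundle}) as a fibre sequence of infinite loop spaces whose structural maps are infinite loop maps; then $b_\eta$ appears as (the $\Omega^\infty$ of) the connecting map in the associated cofibre sequence of spectra and is automatically an infinite loop map. First, both $X_\tau$ and $X_{\tau'}$ are infinite loop spaces, not merely $H$-spaces. By the Corollary to Theorem~\ref{thm:universal}, $X_\tau$ represents the functor $P\mapsto {\rm Cob}_\tau(P)$ on pointed CW complexes. This functor is a generalized cohomology theory --- additive under disjoint union of targets, homotopy invariant, and satisfying the wedge axiom --- so $X_\tau \simeq \Omega^\infty \mathbf{X}_\tau$ for a connective $\Omega$-spectrum $\mathbf{X}_\tau$. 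Equivalently, the symmetric monoidal category of $\tau$-maps under disjoint union of sources yields via Segal's $\Gamma$-space machinery a spectrum whose $\Omega^\infty$ is $X_\tau$. The same applies to $X_{\tau'}$, whence $BX_{\tau'}\simeq \Omega^\infty(\Sigma \mathbf{X}_{\tau'})$ is an infinite loop space as well.

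Next I would check that the fibre inclusion $X_{\tau'}\hookrightarrow X_\tau$ and the projection $p_\tau\colon X_\tau\to \Gamma T\tilde\xi_\eta = \Omega^\infty\Sigma^\infty T\tilde\xi_\eta$ are infinite loop maps. Both correspond to natural transformations of cohomology theories --- the fibre inclusion extends a $\tau'$-map by an empty $\eta$-stratum, and $p_\tau$ sends a $\tau$-map to the $\tilde\xi_\eta$-immersion of its $\eta$-stratum (Section~\ref{section:fibration}). In the $\Gamma$-space picture they arise from symmetric monoidal functors between the underlying cobordism categories, and hence lift to maps of spectra. Consequently (\ref{eq:keyBundle}) is $\Omega^\infty$ applied to a cofibre sequence $\mathbf{X}_{\tau'}\to \mathbf{X}_\tau \to \Sigma^\infty T\tilde\xi_\eta$, and the extension $\Sigma^\infty T\tilde\xi_\eta \to \Sigma\mathbf{X}_{\tau'}$ of this sequence yields, on $\Omega^\infty$, an infinite loop map $\Gamma T\tilde\xi_\eta \to BX_{\tau'}$ which, by construction, classifies the principal $X_{\tau'}$-fibration (\ref{eq:keyBundle}) and therefore coincides with $b_\eta$.

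The main obstacle I anticipate is the coherence assertion in the second paragraph: the operation ``take the $\eta$-stratum'' is visibly multiplicative (disjoint union of sources produces disjoint union of $\eta$-strata), but to conclude it is an infinite loop map one must verify compatibility with all the higher operations of the $E_\infty$-structure. In Segal's $\Gamma$-space formalism this reduces to the essentially tautological fact that the operation acts pointwise on ordered tuples of disjoint $\tau$-maps and commutes with the face and degeneracy maps of the $\Gamma$-space; alternatively one may appeal to Brown representability for natural transformations of cohomology theories on finite CW complexes, which automatically produces a spectrum-level lift of $p_\tau$. Either route avoids the apparent circularity of deducing the infinite loop structure on $X_\tau$ from the very fibration being analysed, since that structure is constructed independently from the symmetric monoidal structure on $\tau$-maps.
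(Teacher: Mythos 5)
Your high-level strategy --- realise the key fibration as $\Omega^\infty$ of a fibre sequence of spectra, so that $b_\eta$ becomes the $\Omega^\infty$ of the connecting map and is therefore automatically an infinite loop map --- is the right picture, but it is genuinely different from what the paper does, and as written it delegates the main content to machinery that is neither set up in the paper nor verified in your sketch. The paper produces \emph{explicit geometric deloopings}: the classifying space $X_{\tau\oplus\varepsilon^m}$ of $m$-framed $\tau$-maps is shown (citing \cite[Remark 17 c,d]{GT}) to satisfy $\Omega^m X_{\tau\oplus\varepsilon^m}\cong X_\tau$, and is then \emph{defined} to be $B^mX_\tau$. The key bundle and its inducing map admit $m$-framed analogues $p_\tau^{(m)}$, $b_\eta^{(m)}$, and the heart of the proof is a diagram chase showing $p_\tau^{(m-1)}=\Omega p_\tau^{(m)}$ and $b_\eta^{(m-1)}=\Omega b_\eta^{(m)}$; this directly exhibits $b_\eta$ as an infinite loop map with deloopings $b_\eta^{(m)}$, with no appeal to $\Gamma$-spaces, Brown representability for spectra, or cobordism categories.

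The gaps in your route are real, not merely stylistic. First, asserting that $P\mapsto \mathrm{Cob}_\tau(P)$ is a generalized cohomology theory presupposes the suspension axiom, i.e.\ the very existence of deloopings of $X_\tau$ --- which is precisely what the paper supplies geometrically via $X_{\tau\oplus\varepsilon^m}$; you cannot assume it for free. Second, the $\Gamma$-space/symmetric-monoidal alternative requires showing that the classifying space $X_\tau$ constructed by the glueing procedure \emph{is} the group completion of the nerve of the cobordism category of $\tau$-maps, a GMTW-type identification that is far from tautological and is not available in this paper. Third, even granting an infinite loop structure on $X_\tau$ from some abstract machine, you would still have to reconcile it with the geometric delooping $X_{\tau\oplus\varepsilon^m}$ (used implicitly throughout the paper, e.g.\ in the proof of Theorem \ref{thm:main}, where $b_\eta$ must be the extension $(b_T)_{ext}$ with respect to the \emph{standard} infinite loop structure on $\Gamma T\tilde\xi_\eta$), and you would need to check that $p_\tau$ lifts to a map of spectra rather than merely being an $H$-map; ``acts pointwise on tuples and commutes with face and degeneracy maps'' is a plausibility argument, not a proof. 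The paper's approach avoids all of this by building the spectrum structure and the spectrum-level map $b_\eta^{(\bullet)}$ simultaneously and concretely.
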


In particular we claim that both the domain and the target spaces of $b_\eta$ are infinite loop spaces. In such a case the map $b_\eta$ is completely determined by its restriction $b_T = b_\eta|_{T \tilde\xi_\eta}$ due to the following general statement:

\begin{prop}[{\cite[p. 39.]{CLM}, \cite[pp.42--43.]{May}}]\label{prop:extension}
Let $K$ be an infinite loop space, $Y$ any $CW$ complex, and $g: Y \to K$ any continuous map. Then there is a homotopically unique infinite loop space map $\Gamma Y \to K$ extending $g$. (Here $\Gamma Y=\Omega^\infty S^\infty Y$).
\end{prop}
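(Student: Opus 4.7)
The plan is to invoke the adjunction between the suspension spectrum functor $\Sigma^\infty$ and the underlying infinite loop space functor $\Omega^\infty$. Since $K$ is an infinite loop space, fix a connective spectrum $\mathbb{K}$ with $\Omega^\infty \mathbb{K} = K$. The fundamental adjunction asserts that homotopy classes of spectrum maps $\Sigma^\infty Y \to \mathbb{K}$ are in natural bijection with homotopy classes of pointed maps $Y \to \Omega^\infty \mathbb{K} = K$, the correspondence being composition with the unit $\iota_Y : Y \to \Omega^\infty \Sigma^\infty Y = \Gamma Y$.

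For existence, start with $g : Y \to K$ and let $\hat g : \Sigma^\infty Y \to \mathbb{K}$ be its spectrum-level adjoint. Define $\tilde g := \Omega^\infty \hat g : \Gamma Y \to K$; this is automatically an infinite loop space map, and the triangular identity for the adjunction gives $\tilde g \circ \iota_Y = g$, so $\tilde g$ extends $g$ along the canonical inclusion $\iota_Y$.

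For uniqueness, suppose $F : \Gamma Y \to K$ is any infinite loop space map with $F \circ \iota_Y \simeq g$. Because $F$ is an infinite loop space map, it is, up to homotopy, of the form $\Omega^\infty \tilde F$ for some spectrum map $\tilde F : \Sigma^\infty Y \to \mathbb{K}$, uniquely determined up to homotopy. The hypothesis $F \circ \iota_Y \simeq g$ then identifies $\tilde F$ with the adjoint of $g$, so $\tilde F \simeq \hat g$ and hence $F \simeq \tilde g$ as infinite loop space maps.

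The main technical obstacle (which is the reason this statement must be attributed to \cite{CLM} and \cite{May} rather than dispatched in one line) lies in giving rigorous meaning to the passage from an \emph{infinite loop space map} to a spectrum map, i.e. in showing that $\Omega^\infty$ induces an equivalence between the homotopy category of connective spectra and that of infinite loop spaces with their infinite-loop-space morphisms. One needs a well-behaved model — algebras over an $E_\infty$-operad, or Segal's $\Gamma$-spaces, or May's $\mathcal{C}_\infty$-spaces — together with the associated recognition and group completion theorems. With that machinery in place, the proposition reduces to the single adjunction isomorphism above, and no further geometric input is required.
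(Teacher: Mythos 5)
The paper does not actually prove this Proposition; it cites it to Cohen--Lada--May and to May's \emph{Geometry of Iterated Loop Spaces}, where it is established via the operadic recognition and group-completion machinery (that $C_\infty Y$ group-completes to $\Omega^\infty S^\infty Y$, and that infinite loop space maps out of a free object are determined by their restriction to the generators). So there is no ``paper's proof'' to compare against in the strict sense.

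Your sketch is a correct restatement of the same underlying fact, phrased through the $\Sigma^\infty \dashv \Omega^\infty$ adjunction. Existence by taking $\Omega^\infty$ of the spectrum-level adjoint of $g$ and using the triangle identity is right; uniqueness via the fact that any infinite loop map $\Gamma Y \to K$ is $\Omega^\infty$ of a homotopically unique map $\Sigma^\infty Y \to \mathbb K$ is also right. You correctly locate the substance of the theorem: everything in the formal adjunction argument is immediate \emph{given} the recognition principle, i.e.\ the equivalence between the homotopy category of infinite loop spaces with infinite loop maps and that of connective spectra, and that equivalence is precisely what May and CLM establish. Since the paper delegates the proof to those references, and your argument delegates it to the same machinery, the two are essentially the same result packaged differently --- the references work at the space level with operads, you work at the spectrum level with the adjunction, and the bridge between them is the recognition principle you single out. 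No gap in the logic, provided the reader grants that input.

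One minor point of precision worth adding: ``extending $g$'' in the statement implicitly means extending along the unit $\iota_Y : Y \hookrightarrow \Gamma Y$, and the homotopy uniqueness is among infinite loop maps making the triangle commute up to homotopy --- which you do use correctly, but it would read better if stated explicitly before invoking the triangle identity.
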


This extension will be denoted by $g_{ext}$. Note that if $g$ is null-homotopic, then $g_{ext}$ is also null-homotopic.

\begin{claim}\label{claim:Gamma}
If $K$ is an infinite loop space, $f: Y \to Y'$ is an arbitrary continuous map, furthermore $g: Y \to K$ and $g': Y' \to K$ are maps such that $g=g'\circ f$, then $g_{ext} = g'_{ext} \circ \Gamma f$.
\end{claim}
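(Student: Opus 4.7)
The plan is to exploit the uniqueness assertion of Proposition \ref{prop:extension}: if I can exhibit $g'_{ext}\circ \Gamma f$ as an infinite loop space map $\Gamma Y \to K$ extending $g$, then the homotopical uniqueness of such an extension forces $g_{ext} \simeq g'_{ext}\circ \Gamma f$. So the argument breaks into the verification of two conditions, plus an appeal to uniqueness.

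First I would show that $g'_{ext}\circ\Gamma f$ is an infinite loop space map. The map $\Gamma f = \Omega^\infty\Sigma^\infty f$ is a morphism of infinite loop spaces by functoriality of $\Omega^\infty\Sigma^\infty$ (this is precisely what it means for $\Gamma$ to be the free infinite loop space functor, see \cite{May}); and $g'_{ext}$ is an infinite loop space map by its very definition in Proposition \ref{prop:extension}. The composition of two infinite loop space maps is again an infinite loop space map, so $g'_{ext}\circ \Gamma f$ qualifies.

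Next I would check that $g'_{ext}\circ \Gamma f$ restricts to $g$ on $Y$. Writing $\iota_Y: Y\hookrightarrow \Gamma Y$ and $\iota_{Y'}: Y'\hookrightarrow \Gamma Y'$ for the canonical inclusions into the free infinite loop spaces, the naturality of $\iota$ gives a commutative square
$$
\xymatrix{
Y \ar[r]^{f} \ar[d]_{\iota_Y} & Y' \ar[d]^{\iota_{Y'}} \\
\Gamma Y \ar[r]^{\Gamma f} & \Gamma Y'.
}
$$
Composing with $g'_{ext}$ and using that $g'_{ext}\circ \iota_{Y'} = g'$ (since $g'_{ext}$ extends $g'$), together with the hypothesis $g = g'\circ f$, yields
$$
(g'_{ext}\circ \Gamma f)\circ \iota_Y \;=\; g'_{ext}\circ \iota_{Y'}\circ f \;=\; g'\circ f \;=\; g.
$$
Hence $g'_{ext}\circ \Gamma f$ is an infinite loop space map whose restriction along $\iota_Y$ is $g$. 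By the homotopical uniqueness half of Proposition \ref{prop:extension}, $g_{ext}$ and $g'_{ext}\circ \Gamma f$ agree up to homotopy.

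The only potential obstacle is justifying that $\Gamma f$ is an infinite loop space map, but this is a standard and formal property of $\Gamma = \Omega^\infty\Sigma^\infty$ already used implicitly in Proposition \ref{prop:extension}, and is built into the references \cite{CLM}, \cite{May}. Everything else is a diagram chase from naturality of the unit $\iota_Y$ of the monad $\Gamma$.
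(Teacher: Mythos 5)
Your proof is correct and follows essentially the same argument as the paper: you show $g'_{ext}\circ\Gamma f$ is an infinite loop space map that restricts to $g$ on $Y$ via the naturality square for $\iota$ and the hypothesis $g=g'\circ f$, then invoke the uniqueness clause of Proposition~\ref{prop:extension}. The paper presents this with a single diagram, but the logical content is identical.
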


Finally, we summarize the Claims in the following theorem. Let us denote by $\sigma: \Sigma S(\tilde \xi_\eta) \to B X_{\tau'}$ the adjoint of the glueing map $\tilde \rho_\eta: S(\tilde \xi_\eta) \to X_{\tau'}$ (see the first construction of $X_\tau$). Let $pr: T\tilde\xi_\eta \to T\tilde\xi_\eta / BG_\eta = \Sigma S(\tilde\xi_\eta)$ be the quotient map contracting the zero section.

\begin{theorem}\label{thm:main}
The classifying map $b_\eta: \Gamma T \tilde \xi_\eta \to B X_\tau$ has the form
$$b_\eta=(\sigma \circ pr)_{ext}.$$
\end{theorem}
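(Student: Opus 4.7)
The plan is to assemble Theorem \ref{thm:main} directly from Claims \ref{claim:b_D}--\ref{claim:Gamma} and Proposition \ref{prop:extension}: the strategy is to identify the restriction $b_\eta|_{T\tilde\xi_\eta}$ with $\sigma\circ pr$, then invoke the uniqueness clause of the extension proposition to promote this identification from $T\tilde\xi_\eta$ to all of $\Gamma T\tilde\xi_\eta$.

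First, I would verify that $b_\eta$ is an infinite loop space map, which is precisely Claim \ref{claim:loop}. Note that this requires both the source and the target to carry infinite loop space structures: the source is $\Omega^\infty\Sigma^\infty T\tilde\xi_\eta$ by construction, and the target $BX_{\tau'}$ is one because $X_{\tau'}$ inherits an infinite loop space structure from the inductive version of the construction (its $H$-space multiplication corresponds to disjoint union of $\tau'$-maps). Without this structural property on $b_\eta$ the theorem could not hold at all, since $(\sigma\circ pr)_{ext}$ is an infinite loop space map.

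Second, I would compute the restriction $b_T:=b_\eta|_{T\tilde\xi_\eta}$. By Claim \ref{claim:b_D}, the key bundle $p_\tau$ is trivial over the open ball bundle $\mathring D\tilde\xi_\eta$, and in particular over its zero section $BG_\eta$; hence $b_T$ is null-homotopic on $BG_\eta$, so up to homotopy it descends through the collapse map $pr: T\tilde\xi_\eta \to T\tilde\xi_\eta/BG_\eta = \Sigma S(\tilde\xi_\eta)$, giving a factorisation $b_T = \sigma\circ pr$ for some $\sigma$. Claim \ref{claim:adjoint} then identifies this $\sigma$ with the adjoint of the glueing map $\tilde\rho_\eta$, which is exactly the $\sigma$ appearing in the statement of Theorem \ref{thm:main}.

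Third, I would close the argument by invoking Proposition \ref{prop:extension}. By definition, $(\sigma\circ pr)_{ext}:\Gamma T\tilde\xi_\eta \to BX_{\tau'}$ is an infinite loop space map whose restriction to $T\tilde\xi_\eta$ equals $\sigma\circ pr$, and the proposition asserts that such an extension is homotopically unique. Since $b_\eta$ is itself an infinite loop space map (Step 1) whose restriction to $T\tilde\xi_\eta$ equals $\sigma\circ pr$ (Step 2), uniqueness forces $b_\eta \simeq (\sigma\circ pr)_{ext}$, which is the content of Theorem \ref{thm:main}.

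The final step of the assembly is essentially routine given the claims; the genuinely hard part is behind the scenes, namely verifying Claim \ref{claim:loop} (the infinite loop space structure on $b_\eta$) and Claim \ref{claim:adjoint} (that the factoring map $\sigma$ really is the adjoint of $\tilde\rho_\eta$). Of these, I would expect Claim \ref{claim:loop} to be the main obstacle, as it requires a coherent identification of the infinite loop structures arising from the two different constructions of the classifying space; Claim \ref{claim:adjoint} is more concrete and should follow by tracing the glueing construction and unwinding the adjunction $[\Sigma A, B]=[A,\Omega B]$.
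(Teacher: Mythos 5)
Your proposal follows the paper's own route: factor $b_T$ through $pr$ via the triviality of the key bundle over the zero section (Claim \ref{claim:b_D}), identify the factoring map $\sigma$ with the adjoint of $\tilde\rho_\eta$ (Claim \ref{claim:adjoint}), and conclude by uniqueness of infinite-loop extensions (Claim \ref{claim:loop} together with Proposition \ref{prop:extension}). Your assessment that Claims \ref{claim:loop} and \ref{claim:adjoint} carry the real technical weight also matches the structure of the paper.
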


\section{Proofs of the Claims}

\subsection{Proof of Claim \ref{claim:b_D}}

The proof follows from the attaching procedure that gives $X_{\tau'}$ and $X_{\tau}$ (the second description of $p_\tau$ from section \ref{section:fibration}). To construct $X_{\tau'}$ one starts with a one-point space and attaches disc bundles $D_J$, one for each allowed multisingularity $\sum_{i=0}^{r-1} m_i\eta_i$ in $\tau'=\{\eta_0, \dots, \eta_{r-1}\}$ corresponding to the multiindex $J=(m_0,\dots,m_{r-1})$, in an order compatible with the partial order of these multisingularities. Denote the attaching map of $D_J$ by $r_J$. The space $p^{-1}_\tau\left(\mathring D(\tilde\xi_\eta)\right)$ is obtained by starting with $\mathring D(\tilde\xi_\eta)$ and attaching products $\mathring D (\tilde\xi_\eta) \times D_J$ in the same order using the maps $id_{\mathring D(\tilde\xi_\eta)} \times r_J$ (where $id_{\mathring D(\tilde\xi_\eta)}$ is the identity map of $\mathring D(\tilde\xi_\eta)$). The Claim follows. \qed


\subsection{Proof of Claim \ref{claim:adjoint}}

Next we try to understand the $X_{\tau'}$-bundle over $\Sigma S(\tilde \xi_\eta)$ induced by the map $\sigma: \Sigma S(\xi_\eta) \to BX_{\tau'}$. We shall denote this bundle by $\hat\xi$. It can be obtained from the disjoint union
$X_{\tau'} \times C(S(\tilde \xi_\eta)) \sqcup X_\tau'$, where  $C(S(\tilde \xi_\eta))$ denotes the cone over $S(\tilde \xi_{\eta})$,  and performing the identifications
$(x,s) \in X_{\tau'} \times C(S(\tilde \xi_\eta)) \sqcup X_\tau'$ with $\tilde \rho_\eta(s) \cdot x \in X_{\tau'}$, if $s \in S(\tilde \xi_\eta)$.
We want to deduce that the map $\sigma: \Sigma S(\tilde \xi_{\eta}) \to BX_{\tau'}$ is the adjoint of $\tilde \rho_\eta: S(\tilde \xi_{\eta}) \to X_{\tau'}$ (recall that $X_{\tau'} = \Omega BX_{\tau'}$). This will follow from the following general proposition.


\begin{prop}
Let $\mathcal H$ be an $H$-space with a principal classifying bundle $\pi: E\mathcal H \overset{\mathcal H}{\to} B\mathcal H$. For any space $Z$ denote by $\mathcal H(Z)$ the set of principal $\mathcal H$-bundles over $Z$. It is well-known that for a suspension $Z=\Sigma Y$ the set $\mathcal H(Z)$ has the following bijective identifications:
\begin{enumerate}[a)]
\item $\varphi: \mathcal H(\Sigma Y) \to [\Sigma Y, B\mathcal H]$
\item $\psi: \mathcal H(\Sigma Y) \to [Y, \mathcal H]$
\end{enumerate}
Then the map $\psi\circ \varphi^{-1}$ sends each homotopy class into its adjoint.
\end{prop}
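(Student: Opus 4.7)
The plan is to compute $\psi \circ \varphi^{-1}$ applied to an arbitrary class $[f] \in [\Sigma Y, B\mathcal{H}]$ and to identify the outcome with the adjoint $\hat f : Y \to \Omega B\mathcal{H} = \mathcal{H}$. First I would unpack the two bijections concretely: $\varphi^{-1}$ assigns to $f$ the pullback bundle $\xi_f = f^*(E\mathcal{H})$, while $\psi$ is the clutching construction, applied after decomposing $\Sigma Y = C_+ Y \cup_Y C_- Y$ as two cones glued along $Y$; each restriction $\xi_f|_{C_\pm}$ is trivializable (since $C_\pm$ is contractible) and the transition function along the equator $Y$ is a map $Y \to \mathcal{H}$, independent of choices up to homotopy.

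The main step is to exhibit a specific trivialization on each cone using the contractibility of $E\mathcal{H}$. Picking basepoint-preserving contractions of each $C_\pm$ to its cone point and lifting $f|_{C_\pm}$ through $\pi : E\mathcal{H} \to B\mathcal{H}$, I would obtain lifts $\tilde f_\pm : C_\pm Y \to E\mathcal{H}$, equivalently sections $s_\pm$ of $\xi_f|_{C_\pm}$, and hence trivializations $\xi_f|_{C_\pm} \cong C_\pm Y \times \mathcal{H}$. For each $y \in Y$ the two sections $\tilde f_+(y)$ and $\tilde f_-(y)$ lie in the same fiber of $E\mathcal{H}$ over $f(y)$, so there is a unique $h(y) \in \mathcal{H}$ with $\tilde f_+(y) = \tilde f_-(y)\cdot h(y)$; this map $h$ is precisely $\psi(\xi_f)$.

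It remains to identify $h$ with $\hat f$. The concatenation of $\tilde f_-$ traversed from $y$ up to the cone point of $C_-$ with $\tilde f_+$ traversed from the cone point of $C_+$ back down to $y$ is a path in $E\mathcal{H}$ from the basepoint $e_0$ to $\tilde f_-(y)^{-1}\cdot \tilde f_+(y)$; its projection to $B\mathcal{H}$ is by construction exactly the loop $\hat f(y)$ obtained from $f$ by restricting to the meridian through $y$. Under the standard equivalence $\mathcal{H} \simeq \Omega B\mathcal{H}$ furnished by the contractible total space $E\mathcal{H}$, this loop corresponds to $h(y)$, giving $\psi(\xi_f)=\hat f$.

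The main obstacle I anticipate is purely bookkeeping: the clutching map and the adjoint are each defined only after a choice of parametrization of the suspension (which hemisphere is ``positive'', how the equator is traversed, which section is written on the right in $s_+ = s_-\cdot h$), and these choices determine whether the identification is $h = \hat f$ or $h = \hat f^{-1}$. Once these conventions are synchronized with the paper's conventions for $\Sigma Y$ and for the adjoint correspondence $[\Sigma Y, B\mathcal{H}] \cong [Y, \Omega B\mathcal{H}]$, the argument above yields the result; naturality in $Y$ and in $\mathcal{H}$ then shows that the equality is independent of the choice of lifts $\tilde f_\pm$.
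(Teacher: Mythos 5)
Your approach is similar in spirit to the paper's but differs in the model. The paper takes the unreduced suspension as a quotient of a \emph{single} cone ($\Sigma Y = \mathrm{Cone}\, Y / Y\times\{1\}$), replaces $E\mathcal H\to B\mathcal H$ by the path fibration $P(B\mathcal H)\to B\mathcal H$, and defines the lift explicitly by $\tilde f(y,t)=f|_{\{y\}\times[0,t]}$. In that model the adjoint appears \emph{verbatim} as the restriction of the lift to $Y\times\{1\}\subset\Omega B\mathcal H$, so no comparison of two trivializations is needed; the identification with the clutching picture is then made in one line using the equivalence $(P(B\mathcal H),\Omega B\mathcal H)\simeq(E\mathcal H,\mathcal H)$. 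Your two-hemisphere clutching argument is the more classical route, but it forces you to do the work the paper's choice of model avoids.

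That extra work is where your write-up has a genuine gap. The concatenated path you describe runs from $\tilde f_-(y)$ to $\tilde f_+(y)$; it does not start at $e_0$, and its projection to $B\mathcal H$ is a loop based at $f(y)$ passing through $\star$, not the meridian loop $\hat f(y)$ based at $\star$ (they are cyclic rotations of one another). Moreover the expression $\tilde f_-(y)^{-1}\cdot\tilde f_+(y)$ is not defined: $E\mathcal H$ carries only a fiberwise right $\mathcal H$-action, not a group structure, so you cannot ``divide'' two points of $E\mathcal H$ and you cannot translate a whole path by $\tilde f_-(y)^{-1}$. The correct move is to lift $\hat f(y)$ itself from $e_0$: take $\tilde f_-(y,\cdot)$ on the first half of the meridian and then the fiber-shifted path $\tilde f_+(y,\cdot)\cdot h(y)^{-1}$ on the second half (the shift is exactly what makes the lift continuous at the equator). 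The endpoint is $e_0\cdot h(y)^{-1}$, which exhibits $h(y)^{\pm 1}$ as the element of $\mathcal H\simeq\Omega B\mathcal H$ corresponding to $\hat f(y)$. Alternatively, keep your path but invoke that the holonomy of a based loop is unchanged under cyclic rotation and inverted under reversal; either way something must be said here, and as written the key identification does not go through. The sign/convention ambiguity you flag at the end is real but secondary to this.
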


Applying this Proposition to $\mathcal H = X_{\tau'}$ and $Y=S(\tilde\xi_\eta)$ and the element $\hat\xi \in \mathcal H(\Sigma Y)$ we get that $\varphi(\hat\xi) = \sigma$ and $\psi(\hat\xi) =\tilde\rho_\eta$ are adjoint and Claim \ref{claim:adjoint} follows. The rest of the subsection is devoted to the proof of the Proposition.

\par

Consider the following commutative diagram of pairs of spaces:

$$
\xymatrix{
(Cone\ Y,Y) \ar[r]^{\tilde f} \ar[d]_{pr}^{Y \times \{1\}} \ar@/^2pc/[rr]^{\hat f}& (P(B\mathcal H),\Omega(B\mathcal H)) \ar[d]^{\Omega B\mathcal H}  \ar[r]^>>>>>{\cong} & (E\mathcal H,\mathcal H) \ar[d]^{\mathcal H} \\
(\Sigma Y,\star) \ar[r]^f & (B\mathcal H,\star) \ar[r]^{id_{B\mathcal H}} & (B\mathcal H,\star)
}
$$

Here $P(B\mathcal H) \to B\mathcal H$ is the path-bundle ($P(B\mathcal H)$ is the space of paths starting at a fixed point $\star\in B \mathcal H$), $f:\Sigma Y \to B \mathcal H$ is a map that induces a bundle $\beta$ over $\Sigma Y$. We can and do assume that $f$ is an embedding. The map $pr: Cone \ Y = Y \times [0,1]/Y \times \{ 0 \} \to \Sigma Y= Cone\ Y / Y \times \{ 1 \}$ is the quotient map. $\tilde f$ is the lift of $f$ that maps $(y,t)$ to the path $f|_{\{y\} \times [0,t]}$. Then $\tilde f$ maps $Y \times \{1\}$ into $\Omega B\mathcal H$ by $f^{adj}$, the adjoint of $f$. $i:\Omega B \mathcal H \to \mathcal H$ is a homotopy equivalence. If we lift the universal bundle $\pi: E\mathcal H \to B\mathcal H$ to its contractible total space $E\mathcal H$, then we get the trivial bundle $E\mathcal H \times \mathcal H$; conversely, identifying $(e,h)\in E\mathcal H \times \mathcal H$ with $(e\cdot h^{-1},1)$ (where $\cdot$ is the $H$-space product in $\mathcal H$) recovers the original bundle $\pi$. Hence the bundle $f^*\pi$, the pullback of $\pi$ by $f$, can be obtained by pulling back the trivial bundle by $\hat f : Cone \ Y \to E\mathcal H$ and then performing the identification on $Y \times \{1 \} \times \mathcal H$:
$$
(y\times \{1\},h ) \sim (y \times \{1\}, i \circ f^{adj}(y\times\{1\})\cdot h).
$$
Hence $\varphi (f^* \pi) =f$ and $\psi(f^* \pi) = f^{adj}$.

\subsection{Proof of Claim \ref{claim:plus1}}

\begin{proof}

$X_T$ is obtained  from the disjoint union $X_{\tau'} \times D(\tilde\xi_\eta) \sqcup  X_{\tau'}$ by an attaching map
$X_{\tau'} \times S(\tilde\xi_\eta) \to X_{\tau'}$, which is a homeomorphism on  $X_{\tau'} \times \{s\}$ for each $s \in
S(\tilde\xi_\eta)$.

Indeed, this map is induced by the generalized ${\tau'}$ -map
$Y_{\tau'} \times S(\tilde\xi_\eta) \sqcup X_{\tau'} \times S(\xi_\eta) \to X_{\tau'} \times S(\tilde\xi_\eta) $, which is
$f_{\tau'} \times (\text{identity of} \ S(\tilde\xi_\eta)$) on the first member of the union, and it is (identity of $(X_{\tau'})) \times (\Phi_\eta|_{S(\xi_\eta)})$ on the second member.
What would be the induced map if we had only one member of this union? The first member would induce just the projection
map $X_{\tau'} \times S(\tilde\xi_\eta) \to X_{\tau'} $ , the second one would induce $\tilde \rho_\eta \circ pr_2$, where $pr_2$ is the projection $X_{\tau'} \times S(\tilde\xi_\eta) \to S(\tilde\xi_\eta)$. (See Remark \ref{remark:gener}.) Now the whole union induces the pointwise product of the two induced maps, i.e. the map that sends $(x,s) \in X_{\tau'} \times S(\tilde\xi_\eta)$ to $(x \cdot  \tilde \rho_\eta(s)).$ This means that $X_T$ is obtained as it was claimed.

\end{proof}

\subsection{Proof of Claim \ref{claim:loop}}
We have to show that the map $\Gamma T\tilde\xi_\eta \to B X_{\tau'}$ that induces the key bundle $X_\tau \to \Gamma T\tilde\xi_\eta$ is an infinite loop space map. Our argument will use the notion of \emph{$m$-framed $\tau$-maps}, where $m$ is a nonnegative integer, that is, $\tau$-maps equipped with $m$ independent normal vector fields; we also call them \emph{$\tau \oplus \varepsilon^m$-maps} (see \cite[Definition 7]{GT} for $m=1$ and \cite[Remark 8 b)]{GT} for arbitrary $m$).

We denote the classifying space for $\tau \oplus \varepsilon^m$-maps by $X_{\tau\oplus\varepsilon^m}$. In \cite[Remark 17 c,d]{GT} it is shown that $\Omega^mX_{\tau\oplus\varepsilon^m}\cong X_\tau$. This identification equips $X_\tau$ with an infinite loop space structure. It is natural to denote $X_{\tau\oplus\varepsilon^1}$ by $BX_\tau$ since its path-fibration has fiber $X_\tau$ and a contractible total space. Similarly we write $B^mX_\tau$ for $X_{\tau\oplus\varepsilon^m}$.

Let us consider the key bundle for $\tau \oplus \varepsilon^m$-maps and the map that induces it from the universal bundle:
\begin{equation}\label{eq:Binducing}\tag{$\star$}
\begin{gathered}
\xymatrix{
B^m X_\tau \ar[r] \ar[d]& \star \ar[d]^{B^m X_{\tau'}} \\
\Gamma T(\tilde\xi_\eta \oplus \varepsilon^m) = \Gamma S^m T\tilde\xi_\eta \ar[r] & B^{m+1} X_{\tau'}
}
\end{gathered}
\end{equation}
Let us consider the resolvents for both vertical maps and the map between these resolvents induced by the horizontal maps. We will use the fact that $\Omega \Gamma SY = \Gamma Y$ for any space $Y$.
$$
\xymatrix{
\Omega B^m X_\tau \ar[r]^{p^{(m-1)}_{\tau}} \ar[d]& \Omega\Gamma S^m T\tilde\xi_\eta \ar@{}[r]|{=} \ar[d]^{b^{(m-1)}_\eta}&\Gamma S^{m-1} T\tilde\xi_\eta \ar[d]\ar[r] & B^m X_{\tau'} \ar[d]^{id}\ar[r] & B^m X_\tau \ar[d]\ar[r]^{p^{(m)}_\tau} & \Gamma S^m T\tilde\xi_\eta \ar[d]^{b^{(m)}_\eta}\\
\star \ar[r]& \Omega B^{m+1} X_{\tau'}\ar@{}[r]|{=}&B^m X_{\tau'} \ar[r] & B^mX_{\tau'} \ar[r] & \star\ar[r] & B^{m+1}X_{\tau'}&
}
$$
(in the diagram we write the resolvents horizontally and the induced maps vertically). We see that the key bundle for $\tau\oplus \varepsilon^{m-1}$-maps,
$$
p^{(m-1)}_\tau : B^{m-1} X_\tau \to \Gamma  S^{m-1} T\tilde\xi_\eta,
$$
is the loop map of the map
$$
p^{(m)}_\tau : B^{m} X_\tau \to \Gamma  S^{m} T\tilde\xi_\eta,
$$
i.e. $p^{(m-1)}_\tau = \Omega p^{(m)}_\tau$. Similarly for the inducing maps it holds that $b^{(m-1)}_\eta = \Omega b^{(m)}_\eta$.

\subsection{Proof of Claim \ref{claim:Gamma}}
We have to show that the diagram remains commutative after adding the dotted arrow.
$$
\xymatrix{
Y \ar[rr]^f \ar[dr]^g \ar[dd]_{i} & & Y' \ar[dl]_{g'} \ar[dd]^{i'} \\
& K & \\
\Gamma Y \ar[ur]^{g_{ext}} \ar@{-->}[rr]^{\Gamma f} & & \Gamma Y' \ar[ul]_{g'_{ext}}
}
$$
The square is commutative by construction of the map $\Gamma f$. Hence the composition $g'_{ext} \circ \Gamma f \circ i$ coincides with $g'_{ext} \circ i' \circ f = g' \circ f = g$, i.e. $g'_{ext} \circ \Gamma f$ is an extension of $g$ to $\Gamma Y$. The map $g'_{ext} \circ \Gamma f$ is also an infinite loop space map as a composition of infinite loop space maps, thus by uniqueness of the extension $g'_{ext} \circ \Gamma f = g_{ext}$ as claimed.

This finishes the proof of Theorem \ref{thm:main}.

\section{Applications}\label{section:apps}

Let $\tau, \eta, \tau'$ be as in section \ref{section:fibration} ($\eta$ is a maximal local singularity from $\tau$ and $\tau'$ is the set of all multisingularities in $\tau$ that do not contain $\eta$) and consider the key bundle \eqref{eq:keyBundle}. We give conditions that imply that this bundle is rationally trivial.

\begin{lemma}

If $\eta$ is a singularity for which the bundle $\tilde \xi_\eta$ is orientable and the rational cohomology ring of its Thom space $H^*(T\tilde \xi_\eta;\Q)$ has no zero divisor (equivalently: the rational Euler class of the bundle $\tilde \xi_\eta$ is non-trivial) then the key bundle is rationally trivial, i.e.
$$X_\tau \underset{\Q} {\cong} X_{\tau'} \times \Gamma T\tilde \xi_\eta.$$
\end{lemma}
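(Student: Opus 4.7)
The strategy is to use Theorem \ref{thm:main} to prove that the classifying map $b_\eta:\Gamma T\tilde\xi_\eta\to BX_{\tau'}$ of the key bundle is rationally null-homotopic, from which rational triviality of the bundle follows.

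First, I would analyze the quotient map $pr:T\tilde\xi_\eta \to T\tilde\xi_\eta/BG_\eta = \Sigma S(\tilde\xi_\eta)$ on rational cohomology. Because $\Sigma S(\tilde\xi_\eta)$ is a suspension, the cup product of any two positive-degree classes in its reduced rational cohomology vanishes. Hence for every $\alpha\in\tilde H^{>0}(\Sigma S(\tilde\xi_\eta);\Q)$ one has
$$pr^*(\alpha)^2 = pr^*(\alpha\cdot\alpha)=0.$$
Under the hypothesis that $H^*(T\tilde\xi_\eta;\Q)$ has no zero divisors, the equation $x^2=0$ forces $x=0$, so $pr^*(\alpha)=0$. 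Therefore $pr^*$ vanishes on all positive-degree rational cohomology, and a fortiori so does $(\sigma\circ pr)^* = pr^*\circ\sigma^*$.

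Next, since $X_{\tau'}$ is an infinite loop space, so is $BX_{\tau'}$, and its rationalization splits as a product of rational Eilenberg--MacLane spaces $\prod_i K(\Q,n_i)$. Based homotopy classes of maps from a nice space into such a product are detected by rational cohomology, so the vanishing of $(\sigma\circ pr)^*$ on positive-degree rational cohomology forces the map $\sigma\circ pr:T\tilde\xi_\eta\to BX_{\tau'}$ to be rationally null-homotopic. Then by Theorem \ref{thm:main} we have $b_\eta=(\sigma\circ pr)_{ext}$, and since rationalization commutes with $\Gamma=\Omega^\infty\Sigma^\infty$ on simply connected spaces of finite type, the rationalization $(b_\eta)_\Q$ is the infinite loop space extension of the null-homotopic map $(\sigma\circ pr)_\Q$. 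By the remark following Proposition \ref{prop:extension} it is therefore null-homotopic. Thus $b_\eta$ itself is rationally null-homotopic, and the key bundle \eqref{eq:keyBundle}, being pulled back from the universal $X_{\tau'}$-bundle along $b_\eta$, becomes trivial after rationalization, giving $X_\tau\simeq_\Q X_{\tau'}\times\Gamma T\tilde\xi_\eta$.

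The main technical point is justifying that rational null-homotopy of $\sigma\circ pr$ genuinely propagates to rational null-homotopy of its infinite-loop-space extension $b_\eta$; this reduces to the standard fact that rationalization commutes with $\Omega^\infty\Sigma^\infty$ on nilpotent spaces of finite type, together with the uniqueness part of Proposition \ref{prop:extension} applied inside the rationalized category. A small auxiliary check is that the hypothesis on $T\tilde\xi_\eta$ is used twice: the orientability guarantees that the Thom isomorphism holds rationally so that the equivalence of ``no zero divisors'' with ``nontrivial Euler class'' makes sense, and the no-zero-divisor condition itself is the crucial input in the squaring argument above.
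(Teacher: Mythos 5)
Your proof is correct and follows the same overall strategy as the paper: reduce to showing that $b_\eta = (\sigma\circ pr)_{ext}$ is rationally null-homotopic, observe that since $BX_{\tau'}$ is rationally a product of Eilenberg--MacLane spaces this is detected by rational cohomology, and then show $(\sigma\circ pr)^*=0$ by showing $pr^*$ already vanishes in positive degrees. The one place where your argument genuinely diverges from the paper is in proving $pr^*=0$. You use the fact that $\Sigma S(\tilde\xi_\eta)$ is a suspension (so positive-degree cup products vanish) together with the ``no zero divisor'' hypothesis to run a squaring argument: $pr^*(\alpha)^2=pr^*(\alpha^2)=0$ forces $pr^*(\alpha)=0$. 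The paper instead works with the equivalent ``nontrivial Euler class'' formulation: it writes the cohomology exact sequence of the pair $(T\tilde\xi_\eta,BG_\eta)$, identifies the restriction $H^*(T\tilde\xi_\eta;\Q)\to H^*(BG_\eta;\Q)$ as $U\cup x\mapsto e(\tilde\xi_\eta)\cup x$ via the Thom isomorphism, and shows this is injective because $e(\tilde\xi_\eta)\neq 0$ and $H^*(BG_\eta;\Q)$ is an integral domain (the latter proved via Weyl-group invariants of $H^*(B\mathbb{T}_\eta;\Q)$). Your route is shorter and bypasses the auxiliary argument that $H^*(BG_\eta;\Q)$ is a domain, at the cost of using the ``no zero divisor'' hypothesis rather than the Euler-class hypothesis directly. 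You are also more explicit than the paper about why rational null-homotopy of $\sigma\circ pr$ passes to its infinite loop extension $b_\eta$ (commutation of rationalization with $\Omega^\infty\Sigma^\infty$ on nilpotent spaces of finite type plus uniqueness of the extension); the paper asserts this step without comment, so your care here is a plus rather than a defect. A very small remark: as you note, the orientability hypothesis is not actually used in your squaring argument itself; it only enters in justifying the parenthetical equivalence between the two forms of the hypothesis.
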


\begin{proof}	
(This result was proved in \cite[Theorem 9]{GT} in a partially different way.)
It is enough to show that the map $b_\eta$ is rationally null-homotopic. Since $b_\eta =  (b_T)_{ext}$ it is enough to show that $b_T$ is rationally null-homotopic. Since the space $BX_{\tau'}$ is an $H$-space, it is rationally homotopy equivalent to product of some Eilenberg-MacLane spaces. Hence the map $b_T: T\tilde \xi_\eta \to BX_{\tau'}$ is rationally null-homotopic if it induces the trivial homomorphism in the rational cohomologies. By the Thom isomorphism any element of $H^*(T\tilde \xi_\eta;\Q)$ has the form $U\cup x$, where $U$ is the Thom class, and $x$ is an element of $H^*(BG_\eta;\Q).$ Note that
the ring $H^*(BG_\eta;\Q)$ has no zero divisor. Indeed, this is the invariant part (under the action of the Weyl goup) of
$H^*(B\mathbb T_\eta;\Q),$ where $\mathbb T_\eta$ is the maximal torus of the group $G_\eta$, and $H^*(B\mathbb T_\eta;\Q)$ is a polynomial ring.

Let us consider the cohomological exact sequence with rational coefficients of the pair $(T\tilde \xi_\eta, BG_\eta)$. Since $T\tilde \xi_\eta/BG_\eta = \Sigma S(\xi_\eta)$, a fragment of the cohomological exact sequence has the form:

$$
H^*(\Sigma S(\xi_\eta)) \to H^*(T\tilde \xi_\eta) \to H^*(BG_\eta)
$$
The latter homomorphism maps an arbitrary class $U\cup x$ into $e(\tilde\xi_\eta) \cup x$, where $e(\tilde\xi_\eta)$ is the Euler class of the bundle $\tilde \xi_\eta.$ Since $e(\tilde\xi_\eta) \cup x$ is non zero if $x$ is not zero, the map is injective. Hence the previous map $pr^*: H^*(\Sigma S(\xi_\eta)) \to H^*(T\tilde \xi_\eta)$ is the trivial homorphism, and then so is the homorphism induced by the composition $\sigma \circ pr$. Since the target of this map is an H-space, the map $\sigma \circ pr : T\tilde \xi_\eta \to BX_{\tau'}$ itself is rationally homotopically trivial. Then its extension $b_\eta$ is also rationally homotopically trivial, and then 
the key bundle $X_\tau \to \Gamma T\tilde \xi_\eta$ is trivial.


\end{proof}

Recall that ${\rm Cob}_\tau(\R^{n+1})$ is abbreviated to ${\rm Cob}_\tau(n)$, the codimension of the $\eta$ stratum in the domain is $c$ and ${\rm Imm}^{\tilde\xi_\eta} (n-c)$ denotes the cobordism group of immersions of $(n-c)$-dimen\-si\-o\-nal manifolds into $\R^{n+k}$ such that the normal bundle is pulled back from the bundle $\tilde \xi_\eta$.

\begin{cor}
Let $\tau, \eta, \tau'$ be as above. Then for any $n$
$$Cob_\tau(n)\otimes \Q \approx Cob_{\tau'}(n) \otimes \Q \oplus Imm^{\tilde\xi_\eta} (n-c) \otimes \Q$$
holds. \qed
\end{cor}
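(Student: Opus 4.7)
The plan is to deduce the corollary directly from the preceding Lemma by combining the rational splitting of $X_\tau$ with the Pontrjagin--Thom correspondence; no new topological input is required.

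First I would invoke the Lemma to obtain the rational homotopy equivalence $X_\tau \simeq_{\Q} X_{\tau'} \times \Gamma T\tilde\xi_\eta$. Rational homotopy equivalence induces an isomorphism on rational homotopy groups, and homotopy groups distribute over Cartesian products, so in every dimension $m$ one gets
$$\pi_m(X_\tau)\otimes\Q \;\cong\; \bigl(\pi_m(X_{\tau'})\otimes\Q\bigr) \oplus \bigl(\pi_m(\Gamma T\tilde\xi_\eta)\otimes\Q\bigr).$$

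Next I would specialize to $m = n+k$ and identify each summand geometrically. By the Corollary to Theorem \ref{thm:universal}, applied with $P$ the one-point compactification $S^{n+k}$ of $\R^{n+k}$, the group $\pi_{n+k}(X_\tau)$ is ${\rm Cob}_\tau(n)$, and similarly $\pi_{n+k}(X_{\tau'})={\rm Cob}_{\tau'}(n)$. The classical Pontrjagin--Thom theorem for immersions (Wells' theorem, recalled in Section \ref{section:fibration}) identifies $\pi_{n+k}(\Gamma T\tilde\xi_\eta)$ with the cobordism group of $\tilde\xi_\eta$-immersions in $\R^{n+k}$; since $\tilde\xi_\eta$ has rank $c+k$ by the definition in Section \ref{section:semiglobal}, such immersions have source dimension $(n+k)-(c+k)=n-c$, yielding ${\rm Imm}^{\tilde\xi_\eta}(n-c)$.

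Substituting these three identifications into the rational splitting at $m=n+k$ gives the claimed isomorphism. The only real ``obstacle'' is pure bookkeeping: verifying the dimension count for the immersion factor, and checking that the product decomposition at the level of classifying spaces genuinely corresponds to the direct sum decomposition of cobordism groups. The latter is automatic because the Pontrjagin--Thom correspondence is functorial and converts Cartesian products of H-spaces into direct sums of the represented abelian groups. No substantive new argument beyond the Lemma is needed.
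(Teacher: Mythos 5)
Your argument is correct and is exactly the intended proof: the paper itself gives no proof for this corollary (the $\qed$ appears immediately after the statement, signalling that it is meant to follow directly from the Lemma), and your route---splitting $X_\tau \simeq_\Q X_{\tau'} \times \Gamma T\tilde\xi_\eta$, passing to $\pi_{n+k}\otimes\Q$, and identifying the three factors via the Corollary to Theorem~\ref{thm:universal} and Wells' Pontrjagin--Thom theorem for $\tilde\xi_\eta$-immersions, with the dimension count $(n+k)-(c+k)=n-c$---is precisely that expected argument. (The paper's ``$\R^{n+1}$'' in the notational reminder is a typo for $\R^{n+k}$, which you correctly glossed over by interpreting $\Cob_\tau(n)$ via $\pi_{n+k}(X_\tau)$.)
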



\begin{theorem}\label{thm:Xsplitting}
Assume that $\tilde \xi_\eta=\varepsilon^1\oplus \zeta$, the codimension $c = \operatorname{rank}
 \tilde\xi_\eta$ is even and $\pi_m(X_\tau) \otimes\Q=0$ whenever $m$ is odd. Then the classifying space $X_\tau$ is rationally homotopically equivalent to the direct product $X_{\tau'} \times \Gamma T\tilde\xi_\eta$.
\end{theorem}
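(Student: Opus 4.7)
The plan is to apply Theorem~\ref{thm:main} to reduce the problem to showing that $b_T = \sigma \circ pr : T\tilde\xi_\eta \to BX_{\tau'}$ is rationally null-homotopic: by that theorem $b_\eta = (b_T)_{ext}$, and as the infinite loop extension preserves rational null-homotopy (by the remark after Proposition~\ref{prop:extension}), the rational triviality of the key bundle will follow once $b_T$ is rationally null. Moreover, since $BX_{\tau'}$ is an infinite loop space and hence rationally a product of Eilenberg--MacLane spaces, it will suffice to show that $b_T^*$ vanishes in positive degrees on rational cohomology.

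The first step is a parity analysis. By the Thom isomorphism $\tilde H^*(T\tilde\xi_\eta; \Q) \cong H^{*-c}(BG_\eta; \Q)$, combined with the facts that $BG_\eta$ (the classifying space of the compact Lie group $G_\eta$) has only even rational cohomology and that $c$ is even, one sees that $H^*(T\tilde\xi_\eta; \Q)$ is concentrated in even degrees. Via stable Hurewicz this also gives $\pi_*(\Gamma T\tilde\xi_\eta)\otimes\Q$ in even degrees; together with the hypothesis on $X_\tau$ this means that both the base $\Gamma T\tilde\xi_\eta$ and the total space $X_\tau$ of the key bundle are rationally H-spaces with only even rational homotopy.

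The central step is to deduce that the fibre $X_{\tau'}$ is also rationally even. Granting this, $BX_{\tau'}$ becomes rationally a product of odd-dimensional Eilenberg--MacLane spaces, so $H^*(BX_{\tau'}; \Q)$ is an exterior algebra generated in odd degrees; each such generator must pull back under $b_T^*$ to zero in the even-graded $H^*(T\tilde\xi_\eta; \Q)$, and since $b_T^*$ is an algebra homomorphism the entire positive-degree part is killed, completing the argument. To establish rational evenness of $X_{\tau'}$ one analyzes the rational Serre spectral sequence of the key bundle, using that $E_2^{p,q} = 0$ for $p$ odd (base is rationally even) and $E_\infty^{p,q} = 0$ for $p+q$ odd (total space is rationally even); combining these constraints with the Hopf algebra structure inherited from the various H-space structures, and crucially exploiting the suspension structure $T\tilde\xi_\eta = \Sigma T\zeta$ coming from the trivial line summand in $\tilde\xi_\eta$, one must force the spectral sequence to collapse rationally at $E_2$. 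The main obstacle is this last step: pinning down how the suspension structure prevents any odd-degree classes in $H^*(X_{\tau'}; \Q)$ from transgressing nontrivially, which is exactly what is needed to rule out their existence and thereby close the argument.
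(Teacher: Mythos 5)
Your overall strategy matches the paper's: reduce via Theorem~\ref{thm:main} to showing that a restriction of $b_\eta$ is rationally null, then exploit parity in rational cohomology, using that $c$ is even and $BG_\eta$ has only even rational cohomology. There is however a noticeable difference in where you set up the parity clash. You work with $b_T: T\tilde\xi_\eta \to BX_{\tau'}$, where the source is rationally \emph{even} and you need the odd exterior generators of $H^*(BX_{\tau'};\Q)$ to die. The paper instead loops once and exploits the suspension hypothesis $T\tilde\xi_\eta = S\,T\zeta$ immediately, working with $(\Omega b_\eta)|_{T\zeta}: T\zeta \to X_{\tau'}$, where the source is rationally \emph{odd} (Thom class of $\zeta$ has odd degree $c-1$) and the target $X_{\tau'}$ is rationally \emph{even}; this lets one conclude vanishing in positive degrees directly without dealing with exterior algebras, and then the paper deloops back up at the end via comparing rational Eilenberg--MacLane factors. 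Both routes are valid, and each requires the same single input: that $X_{\tau'}$ is rationally even.

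That input is exactly where your argument stalls, and it is worth being precise about why. You correctly observe that $X_\tau$ and $\Gamma T\tilde\xi_\eta$ being rationally even does \emph{not} formally force the fibre to be rationally even (e.g. $K(\Z,1)\to * \to K(\Z,2)$), and your attempt to extract this from the rational Serre spectral sequence of the key bundle is, as you say, inconclusive -- and in fact it is circular, since controlling the transgression of odd classes in $H^*(X_{\tau'};\Q)$ is essentially controlling $b_\eta^*$ rationally, which is what one is trying to prove. The paper does not attempt this: it simply asserts that the $K(\Q,j)$ factors of $X_{\tau'}$ are all even. The hypothesis as printed says $\pi_m(X_\tau)\otimes\Q = 0$ for $m$ odd, but the subsequent Remark refers to ``the assumption $\pi_m(X_{\tau'})\otimes\Q = 0$ for $m$ odd'' when verifying it in the Morin case, so the hypothesis should be read as being about $X_{\tau'}$ (the two formulations are of course equivalent once the conclusion is known). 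With the hypothesis taken on $X_{\tau'}$, the step you could not close is just the hypothesis, and your argument goes through.

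One smaller point: you cite the remark after Proposition~\ref{prop:extension} for the claim that rational null-homotopy is preserved by $(-)_{ext}$, but that remark only treats honest null-homotopy. The rational version needs an extra sentence: the rationalization map $K\to K_\Q$ of an infinite loop space $K$ is itself an infinite loop map, so by uniqueness of extensions, $(\text{rationalization})\circ g_{ext}$ equals $(\text{rationalization}\circ g)_{ext}$; if the latter input is null, so is its extension. Easy to fix, but worth stating.
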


\begin{proof}
We have to show that the map $b_\eta:\Gamma T\tilde\xi_\eta \to BX_{\tau'}$ that induces the key bundle $p_\tau$ is rationally null-homotopic. Consider its loop map $\Omega b_\eta : \Omega \Gamma T\tilde\xi_\eta \to \Omega BX_{\tau'}$. Since $T\tilde\xi_\eta= ST\zeta$ we can reinterpret this map as $\Omega b_\eta: \Gamma T\zeta \to \Omega B X_{\tau'}$. This is an infinite loop space map and therefore is the extension in the sense of Proposition \ref{prop:extension} of its restriction to $T\zeta$.

The space $T\zeta$ has nontrivial rational cohomology only in odd degrees, since the base space $BG_\eta$ has rational cohomology only in even degrees and the Thom class of the bundle $\zeta$ has odd degree $c-1$. The space $X_{\tau'}$ is an $H$-space and therefore is rationally homotopically equivalent to a product of Eilenberg-MacLane spaces $K(\Q,j)$ with some integers $j$. None of these numbers $j$ can be odd since $\pi_m(X_\tau) \otimes \Q=0$ if $m$ is odd. Hence any map $T\zeta \to X_{\tau'}$ induces the zero homomorphism in rational cohomology and therefore is rationally null-homotopic. Then its extension $\Omega b_\eta$ is also rationally null-homotopic.

Consider now the following fragment of the resolution of $p_\tau$:
$$
\Omega X_{\tau'} \to \Omega X_\tau \to \Gamma T\zeta \xrightarrow{\Omega b_\eta} X_{\tau'}
$$
Recall that in a resolution each map induces the bundle whose projection is the previous map from the path fibration of its target. In our case the map $\Omega b_\eta$ induces the bundle $\Omega X_\tau \xrightarrow{\Omega X_\tau'} \Gamma T\zeta$ from the path fibration $* \xrightarrow{\Omega X_{\tau'}} X_{\tau'}$. Since $\Omega b_\eta$ is rationally null-homotopic, it induces a rationally trivial bundle, that is,
\begin{equation}\label{eq:splitting}
\Omega X_\tau \cong_\Q \Omega X_{\tau'} \times \Gamma T\zeta
\end{equation}
The spaces $X_\tau$, $X_{\tau'}$ and $\Gamma T\tilde\xi_\eta$ are $H$-spaces and therefore the splitting \eqref{eq:splitting} implies that the rational Eilenberg-MacLane space factors of $X_\tau$ are the multiset union of those of $X_{\tau'}$ and $\Gamma T\tilde\xi_\eta$. Hence we have $X_\tau \cong_\Q X_{\tau'} \times \Gamma T\tilde\xi_\eta$.
\end{proof}

\begin{remark}
The conditions of Theorem \ref{thm:Xsplitting} are satisfied if $\tau'$ consists of all Morin (that is, corank $1$) singularities, $\eta=III_{2,2}$ is the simplest $\Sigma^{2,0}$ singularity, the codimension $k$ of the $\tau$-maps is even and cooriented maps are considered. In this case $c=2(k+2)$ is even and the assumption $\pi_m(X_{\tau'}) \otimes \Q =0$ for $m$ odd follows from \cite[Theorem 6]{GT} and the fact that $\pi_{n+k}(X_{\tau'}) \cong \Cob_{Morin}(n,k)$.
\end{remark}

\end{document}